\numberwithin{equation}{section}
\newtheorem{theorem}{Theorem}[section]
\newtheorem{corollary}[theorem]{Corollary}
\newtheorem{lemma}[theorem]{Lemma}
\newtheorem{assumption}[theorem]{Assumption}
\newcommand{\bke}[1]{\left ( #1 \right )}
\newcommand{\bket}[1]{\left \{ #1 \right \}}
\newcommand{\norm}[1]{ \| #1  \|}
\newcommand{\bka}[1]{{\left\langle #1 \right\rangle}}
\newcommand{\abs}[1]{\left | #1 \right |}
\def\al{\alpha}
\def\bfeta{\mathbf\eta}
\def\de{\delta}
\def\e {\varepsilon}
\def\la{\lambda}
\def\si{\sigma}
\def\wV{\widehat V}
\def\wP{\widehat P}
\def\Ga{\Gamma}
\def\De{\Delta}
\def\Si{\Sigma}
\def\Om{\Omega}
\newcommand{\w}{{\mathbf v}}
\newcommand{\loc} {\mathop{\mathrm{loc}}}
\newcommand{\R}{\mathbb{R}}
\def\Ha{{\mathfrak{H}}}
\newcommand{\Z}{\mathbb{Z}}
\newcommand{\N}{\mathbb{N}}
\renewcommand{\div}{\mathop{\rm div}}
\newcommand{\pd}{\partial}
\newcommand{\nb}{\nabla}
\newcommand{\td}{\tilde}
\newcommand{\wbar}[1]{\overline{\rule{0pt}{2.4mm} {#1}}}
\newcommand{\lec}{{\ \lesssim \ }}
\newcommand{\cC}{\mathcal{C}}
\newcommand{\I}{\infty}
\newcommand{\LL}{local-Leray }
\newcommand{\SVERAK}{{\v Sver\'ak}}
\newcommand{\donothing}[1]{}
\begin{document}

\title{Forward Self-Similar Solutions of the Navier-Stokes Equations
  in the Half Space}
\author{Mikhail Korobkov \and Tai-Peng Tsai}
\date{} \maketitle

{\bf Abstract}. For the incompressible Navier-Stokes equations in
the 3D half space, we show the existence of forward self-similar
solutions for arbitrarily large self-similar initial data.

\section{Introduction}
Let $\R^3_+=\{x=(x_1,x_2,x_3):x_3>0\}$ be a half space with
boundary $ \pd \R^3_+=\{x=(x_1,x_2,0)\}$.  Consider the 3D
incompressible Navier-Stokes equations for velocity $u:\R^3_+
\times [0,\I)\to \R^3$
  and pressure $p: \R^3_+ \times [0,\I)\to \R$,
\begin{equation}
\label{NS1} \pd _t u -\De u + (u \cdot \nb) u  + \nb p =0 , \quad
\div u = 0,
\end{equation}
in $ \R^3_+ \times [0,\I)$, coupled with the boundary condition
\begin{equation}
\label{NS3} u|_{\pd \R^3_+}  =0,
\end{equation}
and the  initial condition
\begin{equation}
\label{NS2} u|_{t=0}  = a, \quad \div a=0,\quad a|_{\pd \R^3_+}  =0.
\end{equation}

The system \eqref{NS1} enjoys a scaling property: If $u(x,t)$ is a
solution, then so is
\begin{equation}
\label{scaling}
u^{(\la)}(x,t):= \la u(\la x, \la^2 t)
\end{equation}
for any $\la>0$. We say that $u(x,t)$ is {\bf self-similar} (SS)
if $u=u^{(\la)}$ for every $\la>0$. In that case,
\begin{equation}
\label{SS.formula}
u(x,t)= \frac 1{\sqrt{2t}}\, U \bke{\frac x{\sqrt{2t}}},
\end{equation}
where $U(x) = u(x,\frac 1{2})$.  It is called {\bf discretely
  self-similar} (DSS) if $u=u^{(\la)}$ for one particular $\la>1$. To
get self-similar solutions $u(x,t)$ we usually assume the initial data
$a(x)$ is also self-similar, i.e.,
\begin{equation}
\label{minus1}
a(x)=\frac {a(\hat x)}{|x|},\quad \hat x=\frac x{|x|}.
\end{equation}

In view of the above, it is natural to look for solutions satisfying
\begin{equation}
\label{eq1.8}
|u(x,t)| \le \frac {C(C_*)}{|x|}, \quad \text{or }\quad
\norm{u(\cdot,t)}_{L^{3,\I}}\le C(C_*),
\end{equation}
where $C_*$ is some norm of the initial data $a$.  By $L^{q,r}$,
$1\le q,r\le \I$, we denote the Lorentz spaces.  In such classes,
with sufficiently small $C_*$, the unique existence of mild
solutions -- solutions of the integral equation version of
\eqref{NS1}--\eqref{NS2} via contraction mapping argument, -- has
been obtained by Giga-Miyakawa \cite{GM} and refined by {  Kato \cite{Kato},}
Cannone-Meyer-Planchon \cite{CMP, CP}, {  and Barraza \cite{Barraza}.}  It is also obtained in the
broader class BMO$^{-1}$ in Koch-Tataru \cite{Koch-Tataru}. In the
{ context} of the half space (and smooth exterior
domains), it follows from Yamazaki \cite{Yamazaki}. As a
consequence, if $a(x)$ is SS or DSS with small norm $C_*$ and
$u(x,t)$ is a corresponding solution satisfying \eqref{eq1.8} with
small $C(C_*)$, the uniqueness property ensures that $u(x,t)$ is
also SS or DSS, because $u^{(\la)}$ is another solution with
the~same bound and same initial data $a^{(\la)}=a$.  For large
$C_*$, mild solutions still make sense but there is no existence
theory since perturbative methods like the contraction mapping no
longer work.

Alternatively, one may try to extend the concept of weak solutions
(which requires $u_0 \in L^2(\R^3)$) to more general initial data.
One such theory is \LL solutions in $L^2_{uloc}$, constructed by
Lemari\'e-Rieusset \cite{LR}. However, there is no uniqueness
theorem for them and hence the existence of large SS or DSS
solutions was unknown.  Recently, Jia and \SVERAK{} \cite{JS}
constructed SS solutions for every SS $u_0$ which is locally
H\"older continuous. Their main tool is a local H\"older estimate
for \LL solutions near $t=0$, assuming minimal control of the
initial data in the large. This estimate enables them to prove a
priori estimates of SS solutions, and then to show their existence
by the Leray-Schauder degree theorem. This result is extended by
Tsai \cite{Tsai14} to the existence of discretely self-similar
solutions.

When the domain is the half space $\R^3_+$, however, there is so far
no analog theory of \LL solutions.  Hence the method of \cite{JS},
\cite{Tsai14} is not applicable.

In this note, our goal is to construct SS solutions in half space
for arbitrary large data. By $BC_w$ we denote bounded and weak-*
continuous functions. Our main theorem is the following.

\begin{theorem}
\label{th1.1} Let $\Om=\R^3_+$ and $A$ be the Stokes operator in
$\Om$ {{\rm(}see
{\rm(}\ref{Eq5.5}{\rm)}--{\rm(}\ref{notat}{\rm)}\,{\rm)}}. For any
self-similar vector field $a\in C^1_{loc}( \bar \Om \backslash
\{0\})$ satisfying $\div a=0$, $a|_{\pd \Om}=0$, there is a smooth
self-similar mild solution $u \in BC_w([0,\I);L^{3,\I}_\si(\Om))$
of \eqref{NS1}
  with $u(0)=a$ and
\begin{equation}
\label{Eq1.9}
\norm{u(t)-e^{-tA}a}_{L^2(\Om)}=C t^{1/4},\quad
\norm{\nb(u(t)-e^{-tA}a)}_{L^2(\Om)}=Ct^{-1/4},\quad \forall t>0.
\end{equation}
\end{theorem}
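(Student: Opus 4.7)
The plan is to convert the problem into finding the stationary profile of the perturbation $v := u - e^{-tA}a$ and then to solve the resulting elliptic system by a Leray-Schauder degree argument.

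Since $a$ is self-similar and the Stokes semigroup on $\R^3_+$ commutes with the scaling \eqref{scaling}, $U(x,t):=e^{-tA}a$ is also self-similar with some profile $\tilde U$, so $U(x,t)=(2t)^{-1/2}\tilde U(x/\sqrt{2t})$. Seeking $u=U+v$ with $v$ self-similar of profile $V$ and substituting into \eqref{NS1}, one derives the stationary system in $\R^3_+$,
\begin{equation*}
-\De V - \tfrac12 V - \tfrac12(y\cdot\nb)V + ((V+\tilde U)\cdot\nb)(V+\tilde U) + \nb\Pi = 0,\quad \div V=0,\quad V|_{\pd\R^3_+}=0,
\end{equation*}
and the identities \eqref{Eq1.9} are equivalent to $V\in H^1_0(\R^3_+;\R^3)$. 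To produce such a $V$, I would set up a compact map $\Phi_\si : H^1_{0,\si}(\R^3_+) \to H^1_{0,\si}(\R^3_+)$ for $\si\in[0,1]$, obtained by inverting the Stokes operator against the right-hand side $\tfrac12 V + \tfrac12(y\cdot\nb)V - \si((V+\tilde U)\cdot\nb)(V+\tilde U)$; fixed points at $\si=1$ then coincide with weak solutions of the profile equation, while at $\si=0$ the only fixed point is $V=0$. Compactness of $\Phi_\si$ follows from Rellich-Kondrachov together with the far-field decay $|\tilde U(y)|\lec (1+|y|)^{-1}$ inherited from the self-similarity of $U$.

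The main obstacle is the a priori estimate: one must show that every fixed point of $\Phi_\si$ is bounded in $H^1_0$ uniformly in $\si\in[0,1]$. Testing the profile equation against $V$ yields
\begin{equation*}
\|\nb V\|_{L^2}^2 + \tfrac14\|V\|_{L^2}^2 = -\si\int(V\cdot\nb)\tilde U\cdot V\,dy - \si\int(\tilde U\cdot\nb)\tilde U\cdot V\,dy,
\end{equation*}
but the first integral on the right is not absorbable for large data: integration by parts combined with Hardy's inequality only bounds it by $C\|\tilde U\|_{L^{3,\I}}\|\nb V\|_{L^2}^2$, and $C\|\tilde U\|_{L^{3,\I}}$ need not be small. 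To close the estimate I would argue by contradiction, assuming $\|V_n\|_{H^1_0}\to\I$ along a sequence of fixed points and performing a concentration-compactness analysis on the corresponding $u_n=U+v_n$. Any blow-up limit would produce, after appropriate rescaling, a non-trivial bounded self-similar or ancient solution of Navier-Stokes in $\R^3$ or $\R^3_+$, which I would rule out by combining boundary $\e$-regularity in the half-space with a Liouville-type argument exploiting the $L^2$-dissipation term $\tfrac14\|W\|_{L^2}^2$ that the self-similar scaling provides. This is where the half-space context forces genuinely new analysis, since the \LL machinery used by Jia-\SVERAK{} is unavailable.

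With the a priori bound in hand, the Leray-Schauder degree of $I-\Phi_\si$ is constant in $\si$ and equals $1$ at $\si=0$, yielding a fixed point at $\si=1$. Bootstrapping via elliptic regularity for the Stokes system makes $V$ smooth, and then $u := U + v$ with $v(x,t)=(2t)^{-1/2}V(x/\sqrt{2t})$ is the desired smooth self-similar mild solution with $u(0)=a$. The two identities in \eqref{Eq1.9} follow directly by scaling from $V\in H^1_0(\R^3_+;\R^3)$, and the membership $u\in BC_w([0,\I);L^{3,\I}_\si(\Om))$ is inherited from the corresponding property of $e^{-tA}a$ together with the Sobolev embedding of $H^1_0$ into $L^{3,\I}$.
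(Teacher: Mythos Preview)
Your overall setup---reducing to a profile equation for $V$ and then applying a Leray--Schauder argument---matches the paper's, but two load-bearing steps are not actually carried out, and in both places the paper does something quite different from what you sketch.

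\textbf{Compactness.} You assert that $\Phi_\si$ is compact on $H^1_{0,\si}(\R^3_+)$ by Rellich--Kondrachov together with the decay of $\tilde U$. But $\R^3_+$ is unbounded, so Rellich does not give you strong $L^p$ convergence of a weakly convergent $H^1_0$ sequence; and the decay of $\tilde U$ says nothing about the term $(V\cdot\nb)V$ or the unbounded-coefficient term $(y\cdot\nb)V$. This is exactly why the paper does \emph{not} work directly on $\R^3_+$: it uses Leray's invading method, solving first on half-balls $\Om_k=\R^3_+\cap B_k$ (where compactness is genuine), proving a uniform-in-$k$ a priori bound, and then passing to a weak limit.

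\textbf{A priori bound.} Your proposed contradiction argument---concentration--compactness producing an ancient or self-similar limiting solution, then a Liouville theorem via boundary $\e$-regularity---is not a proof; no such Liouville result is stated or available, and the blow-up of $\|V_n\|_{H^1_0}$ does not in any evident way produce an ancient Navier--Stokes solution. The paper's mechanism is entirely different and is the key idea: normalize $\wV_k=V_k/J_k$, so that any weak limit $V$ satisfies the stationary \emph{Euler} system in $\Om$ with $V|_{\pd\Om}=0$. Then one invokes the fact (Theorem~\ref{kmpTh2.3'}) that for such Euler flows the pressure is constant on $\pd\Om$; integrating $\nb P\cdot U_0$ by parts against a divergence-free $U_0$ with controlled boundary behavior then forces the troublesome term $\int(V\cdot\nb)U_0\cdot V$ to vanish, contradicting the normalized energy identity. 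This ``limiting Euler $+$ constancy of pressure on the boundary'' argument is the heart of the matter and is absent from your proposal.

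The remaining pieces---smoothness of $V$ by bootstrapping, and the derivation of \eqref{Eq1.9} by scaling---are as you say. But the paper also spends a full section verifying the decay of $U_0=e^{-\frac12A}a$ (Assumption~\ref{th3.1}) via explicit estimates on the half-space Stokes Green tensor; your one-line claim that $|\tilde U(y)|\lec(1+|y|)^{-1}$ is ``inherited from self-similarity'' hides the fact that what is actually needed, and nontrivial, is control of $\nb U_0$ and of $U_0+x\cdot\nb U_0$ in $L^2(\R^3_+)$.
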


{\it Comments on Theorem \ref{th1.1}}
\begin{enumerate}
\item There is no restriction on the size of $a$.

\item It is concerned only with existence. There is no assertion on uniqueness.

\item Our approach also gives a second construction of large self-similar
  solutions in the whole space $\R^3$, but for initial data more
  restrictive ($C^1$) than those of \cite{JS}. In
  fact, it would show the existence of self-similar
  solutions in the cones
\[
K_\al = \bket{0 \le \phi \le \al}, \quad (0 < \al \le \pi),
\]
(in spherical coordinates), if one could verify Assumption
\ref{th3.1} for $e^{-\frac12 A}a$. We are able to verify it only
for $\al = \pi/2 $ and $\al=\pi$.

\item We have the uniform bound \eqref{eq1.8} for $u_0(t)=e^{-tA}a$
  and we will show $|u_0(x,t)| \lec (\sqrt t + |x|)^{-1}$ in Section
  \ref{S6}. We expect $u_0(t) \not \in L^q(\Om)$ for any $q\le 3$, and
  $\norm{u_0(t)}_{L^q} \to \I$ as $ t \to 0_+$ for $q>3$.  The
  difference $v=u-u_0$ is more localized: by interpolating
  \eqref{Eq1.9}, $\norm{v(t)}_{L^q} \to 0$ as $t \to 0_+$ for all $q
  \in [2,3)$. Although $\norm{v(t)}_{L^3(\Om)}=C$ for $t>0$, $v(t)$
    weakly converges to $0$ in $L^3$ as $t \to 0_+$, as easily shown
    by approximating the test function by $L^2 \cap L^{3/2}$
    functions.  Both $u_0(t)$ and $v(t)$ belong to
    $L^\I(\R_+;L^{3,\I}(\R^3_+))$.

\end{enumerate}

\medskip

We now outline our proof.  Unlike previous approaches based on the
evolution equations, we directly prove the existence of the
profile $U$ in \eqref{SS.formula}. It is based on the a priori
estimates for $U$ using the classical Leray-Schauder fixed point
theorem and the Leray \textit{reductio ad absurdum} argument
(which has been fruitfully applied in recent papers of Korobkov,
Pileckas and Russo \cite{KPR1}--\cite{KPR4} on the boundary value
problem of stationary Navier-Stokes equations). Specifically, the
profile $U(x)$ satisfies the Leray equations
\begin{equation}
\label{eq1.6}
-\De U - U - x \cdot \nb U + (U \cdot \nb) U  + \nb P =0 , \quad
\div U = 0
\end{equation}
in $\R^3_+$
with zero boundary condition and, in a suitable sense,
\begin{equation}
 U(x)\to U_0(x):= (e^{-\frac 12 A}a)(x)\quad\text{as }|x|\to\I.
\end{equation}
System \eqref{eq1.6} was proposed by Leray \cite{Leray34}, with
the~opposite sign for $U+x\cdot \nb U$, for the study of singular
{\it
  backward} self-similar solutions of \eqref{NS1} in $\R^3$ of the
form $u(x,t) = \frac 1{\sqrt{-2t}}\, U \bke{\frac
  x{\sqrt{-2t}}}$. Their triviality was first established in \cite{NRS} if $U \in L^3(\R^3)$, in particular if $U \in H^1(\R^3)$ as assumed in \cite{Leray34}, and then extended  in \cite{Tsai98} to $U \in L^q(\R^3)$, $3 \le q
\le \I$. In the
forward case and in the whole space setting, we have (see \cite{JS,
  Tsai14})
{ \begin{equation} \label{Eq1.14} |U_0(x)| \sim
|x|^{-1}, \quad V(x):= U(x) - U_0(x), \quad |V(x)| \lec |x|^{-2}\quad
(|x|>1).
\end{equation}}
In the half space setting, it is not clear if one can show
pointwise decay bound for $V$. We will however show that $V(x)$ is
a priori bounded in $H^1_0(\R^3_+)$, and use this a priori bound
to construct a solution. Due to lack of compactness of $H^1_0$ at
spatial infinity, we will use the {\it invading method},
introduced by Leray \cite{Leray33}:
We will approximate $\Om = \R^3_+$ by $\Om_k=\Om
\cap B_k$, $k =1,2,3,\ldots$, where $B_k$ is an increasing sequence of
concentric balls, construct solutions $V_k$ in $\Om_k$ of the
difference equation \eqref{eq3.4} with zero boundary condition, and
extract a subsequence converging to a desired solution $V$ in
$\R^3_+$.

Our proof is structured as follows.  We will first recall some
properties for Euler flows  in Section \ref{S2}, and then use it
to show that $V_k$ are uniformly bounded in $H^1_0(\Om_k)$ in
Section \ref{S3}. In Section \ref{S4}, we construct $V_k$ using
the a priori bound and a linear version of the Leray-Schauder
theorem, and extract a weak limit $V$ using the uniform bound. The
arguments in Sections \ref{S2}--\ref{S4} are valid as long as one
can show that $U_0=e^{-\frac
  12A_\Om}a$, $A_\Om$ being the Stokes operator in $\Om$, satisfies
certain decay properties to be specified in Assumption \ref{th3.1}. In
Sections \ref{S:BC} we show that, for $\Om=\R^3_+$ and those initial
data $a$ considered in Theorem \ref{th1.1}, $U_0$ indeed satisfies
Assumption \ref{th3.1}.  We finally verify that $u(x,t)$ defined by
\eqref{SS.formula} satisfies the assertions of Theorem \ref{th1.1} in
Section \ref{S6}.

Because our existence proof does not use the evolution equation, we do
not need the nonlinear version of the Leray-Schauder theorem as in
\cite{JS, Tsai14}.  As a side benefit, we do not need to check the
small-large uniqueness (cf.~\cite[Lemma 4.1]{Tsai14}).

\section{Some properties of solutions to the Euler system}
\label{S2}

{  For $q\ge1$ denote by $D^{1,q}(\Omega)$ the set of
functions $f\in W^{1,q}_{\loc}(\Omega)$ such that
$\|f\|_{D^{1,q}}(\Omega)=\|\nabla f\|_{L^q(\Omega)}<\infty$.
Recall, that by Sobolev Embedding Theorem, if $q<n$, then for any
$f\in D^{1,q}(\R^n)$ there exists a constant $c\in\R$ such that
$f-c\in L^{p}(\R^n)$\ \ with $p=\frac{nq}{n-q}$.}
In particular,
\begin{equation} \label{SET}
f\in D^{1,2}(\R^3)\Rightarrow f-c\in L^6(\R^3);\qquad f\in
D^{1,3/2}(\R^3)\Rightarrow f-c\in L^3(\R^3).
\end{equation}
Further, denote by $D^{1,2}_0(\Omega)$  the closure of the set of
all smooth functions having compact supports in $\Omega$ with
respect to the norm $\|\,\cdot\,\|_{D^{1,2}(\Omega)}$, \, and
$H(\Omega)=\{{\bf v}\in D^{1,2}_0(\Omega):\, \div {\bf v}=0\}$. In
particular,
\begin{equation} \label{SET1}
H(\Omega)\hookrightarrow L^6(\Omega)
\end{equation}
{(recall, that by Sobolev inequality $\|f\|_{L^6(\R^3)}\le
C\|\nabla f\|_{L^2(\R^3)}$ holds for every function $f\in
C^\infty_c(\R^3)$ having compact support in $\R^3$, see
\cite[Theorem 4.31]{Adams}\,).}

Assume that the following conditions are fulfilled:

\medskip
{\bf (E)} Let $\Om$ be a~domain in~$\R^3$ with (possibly
unbounded) connected Lipschitz boundary $\Gamma=\partial\Omega$,
and the functions $\w\in H(\Omega)$ and $p\in
D^{1,3/2}(\Omega)\cap L^3(\Om)$ satisfy the Euler system
\begin{equation} \label{2.1}\left\{\begin{array}{rcl} \big({\bf
v}\cdot\nabla\big){\bf v}+\nabla p & = & 0 \qquad \ \ \
\hbox{\rm in }\;\;\Omega,\\[4pt]
\div{\bf v} & = & 0\qquad \ \ \ \hbox{\rm in }\;\;\Omega,
\\[4pt]
{\bf v} &  = & 0\ \
 \qquad\  \hbox{\rm on }\;\;\partial\Omega.
\end{array}\right.
\end{equation}

The next statement  was proved in \cite[Lemma 4]{KaPi1} and in
\cite[Theorem 2.2]{Amick} { (see also
\cite[Lemma~4]{Amirat}\,)}.

\begin{theorem}
\label{kmpTh2.3'} {\sl Let the conditions {\rm (E)} be fulfilled.
Then
\begin{equation} \label{bp2} \exists\, \widehat p_0\in\R:\quad
p(x)\equiv \widehat p_0\quad\mbox{for }\Ha^2-\mbox{almost all }
x\in \partial\Omega.\end{equation} }
\end{theorem}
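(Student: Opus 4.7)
The plan is to prove the theorem via the Bernoulli function $\Phi := p + \tfrac12|\w|^2$. Using the vector identity $(\w\cdot\nabla)\w = \tfrac12\nabla|\w|^2 - \w\times\curl\w$, the momentum equation in \eqref{2.1} rewrites as $\nabla\Phi = \w\times\curl\w$ in $\Om$. Two useful consequences follow at once: first, $\w\cdot\nabla\Phi\equiv 0$, so $\Phi$ is constant along streamlines of $\w$; second, both factors on the right are controlled in $L^2$ (indeed $\w\in H(\Om)\hookrightarrow L^6$ by \eqref{SET1} and $\curl\w\in L^2$), so $\w\times\curl\w\in L^{3/2}(\Om)$, consistent with $\nabla p\in L^{3/2}(\Om)$. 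Combined with $p\in L^3(\Om)$ and $\tfrac12|\w|^2\in L^3(\Om)$, this places $\Phi$ in $L^3_{\loc}(\bar\Om)$ with $\nabla\Phi\in L^{3/2}(\Om)$, so $\Phi$ admits a well-defined trace on the Lipschitz boundary $\Gamma$; that trace coincides with the trace of $p$ since $\w|_\Gamma=0$.

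The theorem thus reduces to showing that the trace of $\Phi$ is constant on $\Gamma$. Formally, if $\tau$ is tangent to $\Gamma$, then $\tau\cdot\nabla\Phi = \tau\cdot(\w\times\curl\w) = 0$ on $\Gamma$ because $\w|_\Gamma=0$, so the surface gradient of $\Phi|_\Gamma$ should vanish. To make this rigorous despite $\curl\w$ having no classical boundary trace, the idea is to test the identity $\nabla\Phi = \w\times\curl\w$ against tangential vector fields supported in thin layers $\{\dist(\cdot,\Gamma)<\e\}$ and control the right-hand side by the Hardy inequality $\|\w/\dist(\cdot,\Gamma)\|_{L^2(\Om)}\lesssim\|\nabla\w\|_{L^2(\Om)}$, valid for $\w\in D^{1,2}_0$. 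After integration by parts on the left, the $\e\downarrow 0$ limit of this identity isolates the trace of $\Phi$ paired against an arbitrary surface divergence of a tangential field on $\Gamma$, forcing $\Phi|_\Gamma$ to be locally constant; since $\Gamma$ is connected by (E), it is globally a single constant $\widehat p_0\in\R$, which equals $p|_\Gamma$ as noted.

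The principal obstacle is the middle step: the Bernoulli identity morally transfers the boundary vanishing of $\w$ to the vanishing of the tangential part of $\nabla\Phi$ on $\Gamma$, but because $\curl\w$ is only $L^2$ and has no pointwise trace, this transfer cannot be effected by simple restriction. The Hardy inequality, coupled with a careful choice of cutoff/test field adapted to the Lipschitz geometry of $\Gamma$, is the essential technical ingredient; the remaining pieces (regularity of $\Phi$, trace identification, connectedness argument) are routine consequences of (E) and Sobolev embedding.
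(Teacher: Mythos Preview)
The paper does not prove this theorem; it simply records it as a known result and cites \cite[Lemma 4]{KaPi1} and \cite[Theorem 2.2]{Amick}. Your sketch follows precisely the classical Bernoulli-law argument used in those references (and later refined under minimal smoothness in \cite{korob1}): rewrite the momentum equation as $\nabla\Phi=\w\times\curl\w$ with $\Phi=p+\tfrac12|\w|^2$, observe that $\Phi$ and $p$ have the same trace on $\Gamma$ since $\w|_\Gamma=0$, and then show the tangential gradient of $\Phi|_\Gamma$ vanishes by testing against tangential fields in a thin boundary layer and controlling the right-hand side via the Hardy inequality for $\w\in D^{1,2}_0$. So there is nothing to compare --- your approach \emph{is} the cited proof, and the outline is sound.

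One small caution on the execution: the Hardy inequality $\|\w/\dist(\cdot,\Gamma)\|_{L^2}\lesssim\|\nabla\w\|_{L^2}$ and the construction of the layer cutoff both rely on the Lipschitz regularity of $\Gamma$, and when $\Gamma$ is unbounded (as permitted by (E)) one must localize first and then pass to the global statement via connectedness. You acknowledge this, but in a full write-up that localization should be made explicit; the cited references handle it carefully.
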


Here and henceforth we denote by $\mathfrak{H}^m$ the
$m$-dimensional Hausdorff measure, i.e.,
$\mathfrak{H}^m(F)=\lim\limits_{t\to 0+}\mathfrak{H}^m_t(F)$,
where $\mathfrak{H}^m_t(F)=\inf\bigl\{\sum\limits_{i=1}^\infty
\bigl({\rm diam}F_i\bigr)^m:\, {\rm diam} F_i\leq t, F\subset
\bigcup\limits_{i=1}^\infty F_i\bigr\}$.

\section{A priori bound for Leray equations}
\label{S3}

Recall that
the profile $U(x)$ in \eqref{SS.formula} satisfies Leray equations \eqref{eq1.6}
with zero boundary condition and $U(x) \to U_0(x)$ at spatial infinity.
Decompose
\begin{equation}
U =U_0 + V,\quad {U_0=e^{-\frac 12 A}a}.
\end{equation}
{Because $a$ is self-similar, $u_0(\cdot,t)=e^{-tA}a$ is also self-similar, $ u_0(x,t)=\la u_0 (\la x, \la^2 t)$ for all $\la>0$. Differentiating in $\la$ and evaluating at $\la=1$ and $t=1/2$, we get
\begin{equation}
0=U_0 + x \cdot \nb U_0 + \pd_t u_0(x,\frac 12) = U_0 + x \cdot \nb U_0  +  \De U_0 - \nb P_0,
\end{equation}
for some scalar $P_0$. Thus,}
the difference $V(x)$ satisfies
\begin{equation}
\label{eq3.4} -\De V - V - x \cdot \nb V + \nb P =F_0+F_1(V) ,
\quad \div V = 0,
\end{equation}
{for some scalar $P$},
where
\begin{align}
\label{eq3.5} F_0 &=%
{  - U_0\cdot \nb U_0,}
\\
\label{eq3.6} F_1(V) &=-( U_0+V) \cdot \nb V -  V  \cdot \nb
 U_0,
\end{align}
and $V$ vanishes at the boundary and the  spatial infinity.

For a Sobolev function $f\in W^{1,2}(\Omega)$ put
\begin{equation}
\|f\|_{H^1(\Omega)}:=\bke{\int_\Om  |\nb f|^2 +
\frac12|f|^2}^{1/2}.
\end{equation}
{Denote by $H_0^1(\Omega)$  the closure of the set of all
smooth functions having compact supports in $\Omega$ with respect
to the norm $\|\,\cdot\,\|_{H^1(\Omega)}$, \, and}
$$H_{0,\sigma}^1(\Omega)=\{f\in
H_0^1(\Omega):\div f=0\}.$$
Note that $H^1_0(\Om)=\{f \in
W^{1,2}(\Omega): f|_{\pd \Om}=0,\ \|f\|_{H^1(\Omega)}<\infty\}$
for bounded Lipschitz domains.

 We assume the following.

\begin{assumption}[Boundary data at infinity]
\label{th3.1} {\sl Let $\Om$ be a domain in $\R^3$. The vector
field $U_0:\Om\to \R^3$   satisfies \ {$\div U_0=0$} \
and
\begin{equation}
\label{eq3.4-as1}\|U_0\|_{L^6(\Omega)}<\infty,\qquad \|\nabla
U_0\|_{L^2(\Omega)}<\infty.
\end{equation}
}
\end{assumption}

Note that from Assumption \ref{th3.1} {and
(\ref{eq3.5})} it follows, in particular, that
\begin{equation}
\label{eq3.4-asc} {\abs{\int_\Om F_0 \cdot \eta }\le
C},\qquad\ \abs{\int_\Om (\eta \cdot \nb )U_0 \cdot \eta } \le C
\end{equation}
for any $\eta \in H_{0,\sigma}^1(\Omega)$ with $\norm{\eta}_
{H_{0,\sigma}^1(\Omega)}\le 1$ (by virtue of the evident imbedding
$H_{0,\sigma}^1(\Omega)\hookrightarrow L^p$ for all $p\in[2,6]$).

If it is valid in $\Om$, it is also valid in any subdomain of
$\Om$ with the same constant $C$. We will show in \S \ref{S:BC}
that for $\Om = \R^3_+$ and $a$ satisfying \eqref{Eq5.1}, $U_0=
e^{-\frac 12A}a$ satisfies \eqref{Eq5.3} and hence Assumption
\ref{th3.1}. This is also true if $\Om = \R^3$ and $a$ is
self-similar, divergence free, and locally H\"older continuous.

\begin{theorem}[A priori estimate for bounded domain]
\label{th3.2}  Let $\Om$ be a bounded domain in~$\R^3$ with
connected Lipschitz boundary $\partial\Omega$, and assume
Assumption \ref{th3.1} for $U_0$. Then for any function $V\in
H^1_{0}(\Om)$ satisfying
\begin{equation}
\label{eq3.4-l} -\De V  + \nb P =\lambda\bigl(V + x \cdot \nb
V+F_0+F_1(V)\bigr) , \quad \div V = 0,
\end{equation}
with some~$\lambda\in[0,1]$, we have the a~priori bound
\[
\norm{V}_{H^1(\Om)}^2=\int_\Om \bke{ |\nb V|^2 + \frac12|V|^2} \le
C(U_0,\Om).
\]
\end{theorem}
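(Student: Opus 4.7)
The plan combines a standard $V$-test energy identity for \eqref{eq3.4-l} with a Leray-type \emph{reductio ad absurdum}. Testing \eqref{eq3.4-l} by $V$ and using $\div V=0$, $V|_{\pd\Om}=0$, and (naturally) $\div U_0=0$, the two cancellations
\[
\int_\Om (x\cdot\nb V)\cdot V=-\tfrac32\int_\Om|V|^2,\qquad \int_\Om\bke{(U_0+V)\cdot\nb V}\cdot V=0
\]
yield
\[
\|V\|_{H^1(\Om)}^2+\tfrac{\la-1}{2}\int_\Om|V|^2 = \la\int_\Om F_0\cdot V - \la\int_\Om(V\cdot\nb U_0)\cdot V.
\]
Assumption~\ref{th3.1} only controls the last integral by $C\|V\|_{H^1}^2$, which competes with the coercive left-hand side, so a direct closure fails.

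\textbf{Blow-up and Euler limit.} Suppose on the contrary there exist $\la_k\in[0,1]$ and solutions $V_k$ with $J_k:=\|V_k\|_{H^1}\to\infty$. Normalize $\hat V_k:=V_k/J_k$, $\hat P_k:=P_k/J_k^2$. Along a subsequence $\la_k\to\bar\la\in[0,1]$, $\hat V_k\rightharpoonup\hat V$ weakly in $H^1_{0,\si}(\Om)$ and strongly in $L^p(\Om)$ for every $p\in[2,6)$. Dividing \eqref{eq3.4-l} by $J_k^2$, the prefactors $1/J_k$ or $1/J_k^2$ annihilate the linear and $F_0$ contributions, while $\hat V_k\cdot\nb\hat V_k\to\hat V\cdot\nb\hat V$ in distributions by the usual strong--weak product (after moving a derivative onto the test function). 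The limit is the Euler system
\[
\bar\la(\hat V\cdot\nb)\hat V+\nb\hat P=0,\qquad \div\hat V=0,\qquad \hat V|_{\pd\Om}=0,
\]
with $\nb\hat P\in L^{3/2}(\Om)$ (since $\hat V\in L^6$, $\nb\hat V\in L^2$), hence $\hat P\in D^{1,3/2}(\Om)\cap L^3(\Om)$ after absorbing a constant.

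\textbf{Eliminating the obstructive term.} Dividing the energy identity by $J_k^2$ and passing to the limit (the $F_0$-term acquires an extra $1/J_k$ via Assumption~\ref{th3.1} and vanishes) gives
\[
1+\tfrac{\bar\la-1}{2}\int_\Om|\hat V|^2 = -\bar\la\int_\Om(\hat V\cdot\nb U_0)\cdot\hat V.
\]
For $\bar\la>0$, hypothesis~{\rm(E)} holds for $(\hat V,\hat P/\bar\la)$, and Theorem~\ref{kmpTh2.3'} produces $\hat p_0\in\R$ with $\hat P\equiv\hat p_0$ on $\pd\Om$. Using $\hat V|_{\pd\Om}=0$, $\div\hat V=\div U_0=0$, and $U_0|_{\pd\Om}=0$ (natural for $U_0=e^{-\frac12 A}a$), integration by parts delivers
\[
-\bar\la\int_\Om(\hat V\cdot\nb U_0)\cdot\hat V = \bar\la\int_\Om\bke{(\hat V\cdot\nb)\hat V}\cdot U_0 = -\int_\Om\nb(\hat P-\hat p_0)\cdot U_0 = 0.
\]
So the limit identity collapses to $1+\tfrac{\bar\la-1}{2}\int_\Om|\hat V|^2=0$. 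If $\bar\la=1$, this reads $1=0$. Otherwise $\int_\Om|\hat V|^2=\tfrac{2}{1-\bar\la}\ge 2$, and weak lower semicontinuity gives $\|\hat V\|_{H^1}^2\le 1$, forcing $\int_\Om|\nb\hat V|^2\le 1-\tfrac{1}{1-\bar\la}\le 0$, so $\hat V$ is constant and hence zero by the boundary condition, contradicting $\int_\Om|\hat V|^2\ge 2$. The case $\bar\la=0$ bypasses Theorem~\ref{kmpTh2.3'}: the limit identity alone yields $\int_\Om|\hat V|^2=2$, which combined with $\|\hat V\|_{H^1}^2\le 1$ again forces $\hat V\equiv 0$, a contradiction.

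\textbf{Main obstacle.} The critical point is producing the Euler limit with enough pressure regularity to invoke Theorem~\ref{kmpTh2.3'}, and justifying the integration-by-parts chain so that boundary constancy of $\hat P$ truly annihilates $\int_\Om(\hat V\cdot\nb U_0)\cdot\hat V$. This is precisely where the Kapitanskii--Pileckas--Amick machinery replaces the unavailable $H^1$-coercivity and closes the estimate.
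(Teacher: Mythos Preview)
Your proof is correct and follows the same Leray \emph{reductio ad absurdum} as the paper: normalize a blow-up sequence, pass to an Euler limit, invoke Theorem~\ref{kmpTh2.3'} to make the pressure constant on $\partial\Om$, and integrate by parts against $U_0$ to reach a contradiction. The only noteworthy difference is that the paper normalizes by $J_k^2=\int_\Om|\nb V_k|^2$ (not the full $H^1$ norm) and sets $\widehat P_k=P_k/(\la_k J_k^2)$, which after the Bernoulli step gives the single-line contradiction $1+\tfrac{\la_0}{2}\int_\Om|V|^2=0$ and dispenses with your case analysis on $\bar\la$ and the appeal to weak lower semicontinuity; incidentally, your hypothesis $U_0|_{\partial\Om}=0$ is not needed, since $\hat P\equiv\hat p_0$ on $\partial\Om$ together with $\div U_0=0$ already kills the boundary term.
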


{\it Remark}.\quad Note that $C(U_0,\Om)$ is independent of
$\lambda\in [0,1]$.

\begin{proof}
Let the assumptions of the Theorem be fulfilled. Suppose that
its~assertion is not true. Then there exists a~sequence of
numbers~$\lambda_k\in[0,1]$ and functions~$V_k\in H^1_0(\Om)$ such
that
\begin{equation}
\label{eq3.4-lk} -\De V_k - \lambda_kV_k - \lambda_kx \cdot \nb
V_k + \nb P_k =\lambda_k\bigl(F_0+F_1(V_k)\bigr) , \quad \div V_k
= 0,
\end{equation}
moreover,
\begin{equation}
\label{bnorm-inf}J_k^2:=\int_{\Om} |\nb V_k|^2\to\infty.
\end{equation}
Multiplying the equation~(\ref{eq3.4-lk}) by~$V_k$ and integrating
by parts in~$\Omega$, we obtain the identity
\begin{equation}
\label{binv-17} J_k^2+\frac{\lambda_k}2\int_\Om|V_k|^2=\lambda_k
 \int_{\Om}(F_0-V_k\cdot \nb U_0)V_k.
\end{equation}
Consider the normalized sequence of functions
\begin{equation}
\label{binv-18} \widehat V_k=\frac{1}{J_k}V_k, \quad \widehat
P_k=\frac{1}{\lambda_kJ^2_k}P_k
\end{equation}
Since $$\int_{\Om} |\nb \wV_k|^2\equiv1,$$ we could extract
a~subsequence still denoted by $\wV_k$, which converges weakly in
$W^{1,2}(\Omega)$ to some function $V\in H^1_0(\Omega)$, and
strongly in $L^3(\Om)$. Also we could assume without loss of
generality that~$\lambda_k\to\lambda_0\in[0,1]$.

Multiplying the identity~(\ref{binv-17}) by~$\frac1{J^2_k}$ and
taking a~limit as~$k\to\infty$, we have
\begin{equation}
\label{binv-17-} 1+\frac{\lambda_0}2\int_\Om|V|^2=
 -\lambda_0\int_{\Om}(V\cdot \nb U_0)V=\lambda_0\int_{\Om}(V\cdot \nb V)U_0.
\end{equation}
In particular, $\lambda_k$ is separated from zero for large~$k$.

Multiplying the equation~(\ref{eq3.4-lk})
by~$\frac1{\lambda_kJ_k^2}$, we see that the pairs~$(\wV_k,\wP_k)$
satisfy the~equation
\begin{equation}
\label{binv-19} \wV_k \cdot \nb \wV_k+\nb\wP_k=\frac1{J_k}\biggl(
\frac1{\lambda_k}\De \wV_k + \wV_k + x \cdot \nb \wV_k
+{ \frac1{J_k}}F_0-U_0\cdot \nb \wV_k - \wV_k \cdot \nb
 U_0\biggr).
\end{equation}

Take arbitrary function~$\bfeta\in C^\infty_{c,\si}(\Om)$.
Multiplying (\ref{binv-19}) by~$\bfeta$, integrating by parts and
taking a limit, we obtain finally
\begin{equation}
\label{binv-20} \int_{\Omega}\bigl(V \cdot \nb
V\bigr)\cdot\bfeta=0.
\end{equation}
Since~$\bfeta$ was arbitrary function
from~$C^\infty_{c,\si}(\Om)$, we see that~$V$ is a weak solution
to the Euler equation
\begin{equation}
\label{b2.1-inv}\left\{\begin{array}{rcl} \big(V\cdot\nabla\big){
V}+\nabla P & = & 0 \qquad \ \ \
\hbox{\rm in }\;\;\Omega,\\[4pt]
\div V & = & 0\qquad \ \ \ \hbox{\rm in }\;\;\Omega,
\\[4pt]
V &  = & 0\ \
 \qquad\  \hbox{\rm on }\;\;\partial\Omega,
\end{array}\right.
\end{equation}
with some~$P\in D^{1,3/2}(\Omega)\cap L^3(\Omega)$. By
Theorem~\ref{kmpTh2.3'}, there exists a constant~$\widehat
p_0\in\R$ such that $P(x)\equiv \widehat p_0$ on $\partial\Omega$.
Of course, we can assume without loss of generality that $\widehat
p_0=0$, i.e., $P(x)\equiv 0$ on $\partial\Omega$. Then by
(\ref{binv-17-}) and (\ref{b2.1-inv}${}_1$) we get
$$1+\frac{\lambda_0}2\int_\Om|V|^2=
 -\lambda_0\int_{\Om} U_0\cdot\nabla P=-\lambda_0\int_{\Om} \div(P\cdot
 U_0)=0.
$$
The obtained contradiction finishes the proof of the Theorem.
\end{proof}

\begin{theorem}[A priori bound for invading method]
\label{th3.2-im}  Let $\Om=\R^3_+$, and assume Assumption
\ref{th3.1} for $U_0$. Take a~sequence of
balls~$B_k=B(0,R_k)\subset\R^3$ with $R_k\to\infty$, and consider
half-balls ~$\Omega_k=\Omega\cap B_k$. Then for functions $V_k\in
H^1_{0}(\Om_k)$ satisfying
\begin{equation}
\label{eq3.4-li} -\De V_k - V_k - x \cdot \nb V_k + \nb P_k
=F_0+F_1(V_k), \quad \div V_k = 0,
\end{equation}
we have the a~priori bound
\[
\int_{\Om_k} \bke{ |\nb V_k|^2 + \frac12|V_k|^2} \le  C(U_0)
\]
where the constant~$C(U_0)$ is independent of~$k$.
\end{theorem}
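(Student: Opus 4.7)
The plan is to repeat the Leray-type \textit{reductio ad absurdum} of Theorem~\ref{th3.2}, but now applied along the exhaustion $\Omega_k$, so that in the limit I arrive at a weak Euler solution on the entire half space~$\R^3_+$ to which Theorem~\ref{kmpTh2.3'} can be applied. The new feature compared with the bounded case is the absence of Rellich compactness on~$\R^3_+$: strong convergence is only local, and the tails of the quadratic terms must be absorbed by the global bound $\|\nabla U_0\|_{L^2(\R^3_+)}<\infty$ furnished by Assumption~\ref{th3.1}.

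Suppose the conclusion fails, so that $J_k^2:=\int_{\Omega_k}|\nabla V_k|^2\to\infty$ along a subsequence. Testing \eqref{eq3.4-li} against $V_k$ in the standard way---using $\div V_k=0$ and $V_k|_{\partial\Omega_k}=0$---will produce the energy identity
\[
J_k^2+\tfrac12\|V_k\|_{L^2(\Omega_k)}^2=\int_{\Omega_k}F_0\cdot V_k-\int_{\Omega_k}(V_k\cdot\nabla U_0)\cdot V_k,
\]
in exact analogy with \eqref{binv-17}. Set $\wV_k:=V_k/J_k$ and extend by zero to $\R^3_+$; then $\{\wV_k\}$ is bounded in $H^1_{0,\sigma}(\R^3_+)$, so a subsequence converges to some $V$ weakly in $H^1_{0,\sigma}(\R^3_+)$ and strongly in $L^p_{\loc}(\R^3_+)$ for every $p\in[2,6)$. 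Dividing the energy identity by $J_k^2$ and letting $k\to\infty$, the $F_0$-term vanishes because Assumption~\ref{th3.1} bounds $|\int F_0\wV_k|\le C$, while for the quadratic term I split $\R^3_+$ into $\{|x|\le R\}$ and $\{|x|>R\}$: the inner piece converges by strong $L^4_{\loc}$ convergence of $\wV_k\otimes\wV_k$ paired against $\nabla U_0\in L^2$, and the tail is majorized by $C\|\nabla U_0\|_{L^2(\{|x|>R\})}$ uniformly in~$k$, which is arbitrarily small. Combined with weak lower semicontinuity of the $L^2$-norm, this yields
\[
1+\tfrac12\|V\|_{L^2(\R^3_+)}^2\le-\int_{\R^3_+}(V\cdot\nabla U_0)\cdot V. \qquad(\star)
\]

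Next I divide \eqref{eq3.4-li} by $J_k^2$ and test against arbitrary $\bfeta\in C^\infty_{c,\sigma}(\R^3_+)$. Every linear-in-$V_k$ term carries an extra $1/J_k$ and tends to zero after integration by parts against the compactly supported $\bfeta$; the $F_0$-contribution is killed by $1/J_k^2$ together with Assumption~\ref{th3.1}; the pressure drops out since $\div\bfeta=0$; and the quadratic convective term passes to the limit by strong $L^4_{\loc}$ convergence. The outcome is
\[
\int_{\R^3_+}V\otimes V:\nabla\bfeta=0\qquad\forall\,\bfeta\in C^\infty_{c,\sigma}(\R^3_+),
\]
i.e., $V$ is a weak Euler solution on $\R^3_+$ with zero boundary value. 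By de~Rham there exists $P$ with $\nabla P=-(V\cdot\nabla)V\in L^{3/2}(\R^3_+)$, so $P\in D^{1,3/2}(\R^3_+)$ and, after fixing the additive constant via \eqref{SET}, also $P\in L^3(\R^3_+)$. The boundary $\partial\R^3_+$ is connected and Lipschitz, so Theorem~\ref{kmpTh2.3'} yields $P\equiv\widehat p_0$ on $\partial\R^3_+$, and absorbing $\widehat p_0$ into the additive constant arranges $P|_{\partial\R^3_+}=0$.

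The remaining---and main---obstacle is to conclude $\int(V\cdot\nabla U_0)\cdot V=0$: in the bounded case this follows by Gauss--Green from $\int\div(PU_0)=\int_{\partial\Omega}PU_0\cdot n=0$, but on $\R^3_+$ that step would demand pointwise decay of $P$ at spatial infinity which is not at hand. Instead I will use the continuity estimate $|\int V\otimes V:\nabla\phi|\le\|V\|_{L^4}^2\|\nabla\phi\|_{L^2}$ to extend the weak Euler identity above from $\phi\in C^\infty_{c,\sigma}(\R^3_+)$ to every $\phi\in H(\R^3_+)$, via the standard density of $C^\infty_{c,\sigma}$ in $H$ in the $D^{1,2}$-norm for half spaces. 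Since Assumption~\ref{th3.1}---together with $\div U_0=0$ and $U_0|_{\partial\R^3_+}=0$, which are built into the construction $U_0=e^{-\frac12A}a$---places $U_0$ in $H(\R^3_+)$, choosing $\phi=U_0$ gives $\int(V\cdot\nabla U_0)\cdot V=\int V\otimes V:\nabla U_0=0$. Substituting into~$(\star)$ produces $1+\tfrac12\|V\|_{L^2}^2\le 0$, the required contradiction.
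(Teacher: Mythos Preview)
Your argument is correct and reaches the contradiction, but the decisive step is handled differently from the paper. The paper keeps the pressure: it shows $P\in L^s(\R^3_+)$ for $s\in[3/2,3]$ and then proves $\int_{S_R^+}|P|^{4/3}\to 0$ and $\int_{S_R^+}|U_0|^4\to 0$ as $R\to\infty$, so that $1=\int_{\Om}(V\cdot\nb)V\cdot U_0=-\int_{\Om}\nabla P\cdot U_0=-\lim_{R}\int_{S_R^+}P(U_0\cdot\mathbf n)=0$ via Gauss--Green on the half-balls. You instead bypass the pressure entirely by extending the weak Euler identity $\int V\otimes V:\nabla\phi=0$ from $C^\infty_{c,\sigma}$ to all of $H(\R^3_+)$ by the continuity bound $|\int V\otimes V:\nabla\phi|\le\|V\|_{L^4}^2\|\nabla\phi\|_{L^2}$ and density, and then substitute $\phi=U_0$. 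This is cleaner---no sphere-integral estimates---but it leans on the characterization $U_0\in H(\R^3_+)$, i.e.\ that a divergence-free $L^6$ field with $\nabla U_0\in L^2$ and zero trace on $\partial\R^3_+$ can be approximated in $D^{1,2}$ by $C^\infty_{c,\sigma}$; this is standard for the half space (Galdi, Chapter~III), and you are right that $\div U_0=0$, $U_0|_\Sigma=0$ are implicit in $U_0=e^{-\frac12A}a$ even though Assumption~\ref{th3.1} does not list them (the paper's proof tacitly uses the same two facts when dropping the boundary term on~$\Sigma$). Note that with your route the whole paragraph invoking de~Rham and Theorem~\ref{kmpTh2.3'} is unnecessary: nothing in your contradiction uses $P|_{\partial\R^3_+}=0$. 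Two small points to tidy: the failure of the conclusion gives $\int(|\nabla V_k|^2+\tfrac12|V_k|^2)\to\infty$, and you should say why this forces $\int|\nabla V_k|^2\to\infty$ (a short interpolation using $\|V_k\|_{L^4}^2\lesssim\|V_k\|_{L^2}^{1/2}\|\nabla V_k\|_{L^2}^{3/2}$ in the energy identity does it); the same estimate also justifies your claim that $\wV_k$ is bounded in $H^1_{0,\sigma}(\R^3_+)$ and not merely in $D^{1,2}$.
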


\begin{proof}
Let the assumptions of the Theorem be fulfilled. Suppose that
its~assertion is not true. Then there exists a~sequence of
domains~$\Omega_{k}$ and a sequence of solutions $V_k \in H^1_0(\Om_k)$ of
\eqref{eq3.4-li}
such that \begin{equation}
\label{norm-inf}J_k^2:=\norm{V_k}_{H^1(\Om_k)}^2=\int_{\Om_k}
\bke{ |\nb V_k|^2 + \frac12|V_k|^2}\to\infty.
\end{equation}
Multiplying the equation~(\ref{eq3.4-li}) by~$V_k$ and integrating
by parts in~$\Omega_k$, we obtain the identity
\begin{equation}
\label{inv-17} J_k^2=
 \int_{\Om_k}(F_0-V_k\cdot \nb U_0)V_k.
\end{equation}
Consider the normalized sequence of functions
\begin{equation}
\label{inv-18} \widehat V_k=\frac{1}{J_k}V_k, \quad \widehat
P_k=\frac{1}{J^2_k}P_k
\end{equation}
Multiplying the equation~(\ref{eq3.4-li}) by~$\frac1{J_k^2}$, we
see that the pairs~$(\wV_k,\wP_k)$ satisfy the~equation
\begin{equation}
\label{inv-19} \wV_k \cdot \nb \wV_k+\nb\wP_k=\frac1{J_k}\bigl(
\De \wV_k + \wV_k + x \cdot \nb \wV_k +F_0-U_0\cdot \nb \wV_k -
\wV_k \cdot \nb
 U_0\bigr).
\end{equation}
Since $$\int_{\Om_k} \bke{ |\nb \wV_k|^2 +
\frac12|\wV_k|^2}\equiv1,$$ we could extract a~subsequence still
denoted by $\wV_k$, which converges weakly in $W^{1,2}(\Omega)$ to
some function $V\in H_0^1(\Omega)$, and strongly in $L^2(E)$ for
any $E \Subset \wbar \Om$.

Multiplying the identity (\ref{inv-17}) by~$\frac1{J^2_k}$ and
taking a~limit as~$k\to\infty$, we have
\begin{equation}
\label{inv-17-} 1=
 \int_{\Om}(-V\cdot \nb U_0)V.
\end{equation}
Take arbitrary function~$\bfeta\in C^\infty_{c,\si}(\Om)$.
Multiplying (\ref{inv-19}) by~$\bfeta$, integrating by parts and
taking a limit, we obtain finally
\begin{equation}
\label{inv-20} \int_{\Omega}\bigl(V \cdot \nb
V\bigr)\cdot\bfeta=0.
\end{equation}
Since~$\bfeta$ was arbitrary function
from~$C^\infty_{c,\si}(\Om)$, we see that~$V$ is a weak solution
to the Euler equation
\begin{equation}
\label{2.1-inv}\left\{\begin{array}{rcl} \big(V\cdot\nabla\big){
V}+\nabla P & = & 0 \qquad \ \ \
\hbox{\rm in }\;\;\Omega,\\[4pt]
\div V & = & 0\qquad \ \ \ \hbox{\rm in }\;\;\Omega,
\\[4pt]
V &  = & 0\ \
 \qquad\  \hbox{\rm on }\;\;\partial\Omega,
\end{array}\right.
\end{equation}
with some~$P\in D^{1,3/2}(\Omega)\cap L^3(\Omega)$. More
precisely, since $V,\nabla V\in L^2(\Omega)$, we have $P\in
D^{1,q}(\Omega)$ for every $q\in[1,3/2]$, consequently, $P\in
L^s(\Omega)$ for each $s\in[3/2,3]$. { In particular,
$P\in L^3(\Omega)$ \,and\, $\nabla P\in L^{\frac98}(\Omega)$,
furthermore,
\begin{align*}
\int_{S^+_R} |P|^{\frac43}
&= - R^2\int_R^\infty  \int_{S^+_1}\frac d{dr}
\bke{|P(r\omega)|^{\frac43}} d \omega\, dr
\\
&\lec \int_{|x|>R}
|P|^{\frac13} |\nabla
P|\le\biggl(\int_{|x|>R}|P|^3\biggr)^{\frac19}\biggl(\int_{|x|>R}|\nabla
P|^{\frac98}\biggr)^{\frac89},
\end{align*}
where $S_R^+=\{x\in\Omega:|x|=R\}$ is the corresponding
half-sphere. Hence we conclude that}
\begin{equation}
\label{2.1-invs-k1}\int\limits_{S^+_R}|P|^{4/3}\to0\qquad\mbox{ as
}R\to\infty.
\end{equation}
 Analogously, from the assumption
$U_0\in L^{6}(\Omega)$, $\nabla U\in L^2(\Omega)$ it is very easy
to deduce that
\begin{equation}
\label{2.1-invs-k2} \int\limits_{S^+_R}|U_0|^4\to0\qquad\mbox{ as
}R\to\infty.
\end{equation}
From the other hand, by (\ref{inv-17-}) and (\ref{2.1-inv}$_1$) we
obtain
\begin{equation}
\label{inv-17---} 1=
 \int_{\Om}(V\cdot \nb)V\cdot U_0 =
- \int_{\Om}\nabla P\cdot
U_0=-\lim_{R\to\infty}\int_{\Om_R}\div\bigl(P\cdot
U_0\bigr)=-\lim_{R\to\infty}\int_{S^+_R}P \bigl(U_0\cdot\mathbf
n\bigr)=0
\end{equation}
where $\Omega_R=\Omega\cap B(0,R)$ and the last equality follows
from (\ref{2.1-invs-k1})--(\ref{2.1-invs-k2}). The obtained
contradiction finishes the proof of the Theorem.
\end{proof}

\section{Existence for Leray equations}
\label{S4}

The proof of existence theorem for the system of
equations~\eqref{eq3.4}--\eqref{eq3.6}  in bounded domains~$\Om$
is based on the following fundamental fact.

\begin{theorem}[Leray--Schauder Theorem] \label{L-ShT} {\sl Let
$S:X\to X$  be a continuous and compact mapping of a Banach
space~$X$ into itself, such that the set
$$\{x\in X:x=\lambda Sx\quad\mbox{ for some }\lambda\in[0,1]\}$$
is bounded. Then~$S$ has a fixed point $x_*=Sx_*$.}
\end{theorem}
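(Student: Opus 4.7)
The plan is to derive this from Schauder's fixed point theorem via the standard Leray--Schauder retraction trick. By hypothesis, the set $E=\{x\in X:x=\lambda Sx\text{ for some }\lambda\in[0,1]\}$ is bounded, so choose $M$ with $\|x\|\le M$ for all $x\in E$, and fix any $R>M$. Define a radial retraction $r:X\to \overline{B(0,R)}$ by $r(y)=y$ if $\|y\|\le R$ and $r(y)=Ry/\|y\|$ otherwise, and set $T=r\circ S$. Then $T:\overline{B(0,R)}\to\overline{B(0,R)}$; since $r$ is continuous and $1$-Lipschitz and $S$ is continuous and compact, $T$ is continuous and compact, and $T(\overline{B(0,R)})$ is relatively compact in the closed convex set $\overline{B(0,R)}$.

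I would next invoke Schauder's fixed point theorem (itself a consequence of Brouwer's theorem via finite-dimensional approximation on convex hulls of finite $\varepsilon$-nets of the compact image) to produce a point $x_*\in \overline{B(0,R)}$ with $Tx_*=x_*$. The third step is to show this $x_*$ is actually a fixed point of $S$. If $\|Sx_*\|\le R$ then $x_*=r(Sx_*)=Sx_*$ and we are finished. Otherwise $\|Sx_*\|>R$, and then
\[
x_*=\frac{R}{\|Sx_*\|}\,Sx_*=\lambda Sx_*,\qquad \lambda:=\frac{R}{\|Sx_*\|}\in(0,1),
\]
so $x_*\in E$, forcing $\|x_*\|\le M<R$. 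But the definition also gives $\|x_*\|=R$, a contradiction. Hence this alternative does not occur and $Sx_*=x_*$.

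The only genuinely nontrivial ingredient is Schauder's theorem; the compactness of $S$ is what ultimately allows reduction to Brouwer in a finite-dimensional subspace, and this is where I would expect the main technical effort if one wanted a fully self-contained proof. The retraction argument itself is essentially bookkeeping: its purpose is to convert the open-ended a priori bound assumption on solutions of $x=\lambda Sx$ into the closed, convex, bounded setting required by Schauder. I would emphasize in the write-up that the theorem is precisely what is needed in Section~\ref{S4}: the a priori estimate of Theorem~\ref{th3.2} (uniform in $\lambda\in[0,1]$) supplies exactly the boundedness hypothesis on $E$, with no need to know a priori that any solution of $x=Sx$ exists or is unique.
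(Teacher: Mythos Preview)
Your argument is correct and is the standard textbook derivation of the Leray--Schauder principle from Schauder's fixed point theorem via radial retraction. However, there is nothing to compare against: the paper does not prove Theorem~\ref{L-ShT} at all. It is stated as a classical ``fundamental fact'' and used as a black box in the proof of Corollary~\ref{EiBD}. So your proposal supplies a self-contained proof where the paper simply cites the result; the approaches cannot differ because the paper offers none.
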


 Let $\Om$ be a~domain in~$\R^3$ with
 connected Lipschitz boundary $\Gamma=\partial\Omega$, and put
$X=H^1_{0,\si}(\Om)$.

For functions~$V_1,V_2\in H^1_{0,\si}(\Om)$
denote~$\bka{V_1,V_2}_H=\int_{\Omega}\nabla V_1\cdot\nabla V_2$.
Then the system \eqref{eq3.4}--\eqref{eq3.6} is equivalent to the
following identities:
\begin{equation}
\label{iLs} \bka{V,\zeta}_H = \int_\Omega G(V)\cdot\zeta,\quad
\forall \zeta \in C^\I_{c,\si}(\Om),
\end{equation}
where $G(V)=V+x\cdot\nb V+F(V)$, \ $F(V)=F_0+F_1(V)$,
\begin{align}
F_0(x)&=%
-U_0 \cdot \nb U_0,
\\
F_1(V) &=-(U_0+V) \cdot \nb V -  V  \cdot \nb U_0.
\end{align}
Since $H^1_{0,\si}(\Om)\hookrightarrow L^6(\Omega)$, by Riesz
representation theorem, for any $f\in L^{6/5}(\Omega)$ there
exists a~unique mapping~$T(f)\in H^1_{0,\si}(\Om)$ such that
\begin{equation}
\label{iLs1} \bka{T(f),\zeta}_H = \int_\Omega f\cdot\zeta,\quad
\forall \zeta \in C^\I_{c,\si}(\Om),
\end{equation}
moreover,
$$\|T(f)\|_H\le \|f\|_{X'},$$
where
$$\|f\|_{X'}=\sup\limits_{\zeta\in C^\I_{c,\si}(\Om),\
\|\zeta\|_H\le1}\int_\Omega f\cdot\zeta.$$ Then the system
\eqref{eq3.4}--\eqref{eq3.6}$\sim$\eqref{iLs} is equivalent to the
equality
\begin{equation}
\label{coV-eq} V=T(G(V)).
\end{equation}

\begin{theorem}[Compactness]
\label{bp-CompT} {\sl If  $\Om$ is a~bounded domain in~$\R^3$ with
connected Lipschitz boundary $\Gamma=\partial\Omega$, and
Assumption \ref{th3.1} holds for $U_0$, then for
$X=H^1_{0,\si}(\Om)$ the operator $S:X\ni V\mapsto T(G(V))\in X$
is continuous and compact.}
\end{theorem}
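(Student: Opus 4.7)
The plan is to factor $S=T\circ G$ and exploit that $T:X'\to X$ is a bounded linear operator (with operator norm at most $1$, by definition \eqref{iLs1}). Continuity and compactness of $S$ will then follow as soon as I prove that the nonlinear map $G:X\to X'$ is itself continuous and sends bounded subsets of $X$ to relatively compact subsets of $X'$, since composition with a continuous linear operator preserves both properties.

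First I would verify that $G(V)=V+x\cdot\nabla V+F_0-U_0\cdot\nabla V-V\cdot\nabla U_0-V\cdot\nabla V$ defines an element of $X'$ with a quadratic bound in $\|V\|_X$. The Sobolev embedding $X\hookrightarrow L^6(\Om)$, boundedness of the coordinate function $x$ on the bounded $\Om$, and H\"older's inequality combine with Assumption~\ref{th3.1} ($U_0\in L^6$, $\nabla U_0\in L^2$, $F_0\in X'$) to dominate each of the linear, bilinear, and cubic terms. Applying the same estimates to differences $G(V)-G(W)$ yields the local Lipschitz continuity $\|G(V)-G(W)\|_{X'}\le C(1+\|V\|_X+\|W\|_X)\,\|V-W\|_X$, which already gives continuity of $G$.

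The core step is compactness: if $V_n\rightharpoonup V$ weakly in $X$, I would show $G(V_n)\to G(V)$ strongly in $X'$. Rellich's theorem for the bounded Lipschitz domain $\Om$ yields $V_n\to V$ strongly in every $L^p(\Om)$ with $p\in[1,6)$. Terms that carry no derivative on $V_n$ (namely $V_n$ itself and $V_n\cdot\nabla U_0$) pass to the limit directly via H\"older. The delicate terms are those carrying $\nabla V_n$, i.e. $x\cdot\nabla V_n$, $U_0\cdot\nabla V_n$, and the cubic $V_n\cdot\nabla V_n$, since $\nabla V_n$ converges only weakly in~$L^2$. Here I would integrate by parts against the test function $\zeta\in C^\infty_{c,\si}(\Om)$, using $V_n|_{\pd\Om}=0$ to kill the boundary integral and $\div V_n=0$ (together with $\div U_0=0$, coming from $U_0=e^{-\frac12 A}a$) to drop the extra divergence terms. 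This produces schematic identities
\[
\int(V_n\cdot\nabla V_n)\cdot\zeta=-\int V_n\otimes V_n:\nabla\zeta,\qquad \int(U_0\cdot\nabla V_n)\cdot\zeta=-\int V_n\otimes U_0:\nabla\zeta,
\]
and $\int(x\cdot\nabla V_n)\cdot\zeta=-3\int V_n\cdot\zeta-\int V_n\cdot(x\cdot\nabla\zeta)$. After this transfer the derivative sits on $\zeta$, and each integrand pairs a strongly $L^p$-convergent factor ($V_n$, $V_n\otimes V_n$, or $V_n\otimes U_0$) against a fixed $L^{p'}$ factor ($\nabla\zeta$, $x\cdot\nabla\zeta$, or $\zeta$); taking the supremum over $\|\zeta\|_H\le 1$ delivers the required $X'$-norm convergence of $G(V_n)$.

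The main obstacle is precisely this transfer of the derivative off $V_n$: a naive attempt fails because weak $L^2$ convergence of $\nabla V_n$ would produce only weak convergence of $G(V_n)$ in~$X'$, not the strong convergence needed for compactness. The solenoidality $\div V_n=0$ and the vanishing trace built into the space $X$ are exactly what allow the integration-by-parts manoeuvre to succeed, after which the argument reduces to Rellich--Sobolev estimates that are available because $\Om$ is bounded with Lipschitz boundary.
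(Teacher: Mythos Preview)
Your proposal is correct and follows essentially the same route as the paper: local Lipschitz continuity of $G$ (hence of $S=T\circ G$) via Sobolev/H\"older estimates, and compactness via Rellich together with integration by parts to transfer the derivatives off $V_n$ onto the test function $\zeta$. One small point: $\div U_0=0$ is not part of Assumption~\ref{th3.1} (though it holds in the intended application $U_0=e^{-\frac12A}a$), but this is harmless since the extra term $\int (V_n-V)\,(\div U_0)\,\zeta$ is controlled by $\|V_n-V\|_{L^3}\|\nabla U_0\|_{L^2}\|\zeta\|_{L^6}$ anyway.
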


\begin{proof}
(i) For $V, \td V \in X$, denoting $v = \td V - V$,
\[
F(\td V) - F(V) = - (U_0+V+v)\cdot \nb v - v \cdot \nb (U_0+V).
\]
Thus we have
\begin{align}
\nonumber &\norm{S(\td V) - S(V)}_X
\\
\nonumber &\lec\|v\|_{L^2}+\|\nabla v\|_{L^2}+\norm{F(\td V) -
F(V)}_{L^2 + L^{6/5}}
\\
\nonumber &\lec \|v\|_{L^2}+\|\nabla v\|_{L^2}+\norm{U_0}_{L^2}
\norm{\nb v}_{L^2} + \norm{V+ v}_{L^2} \norm{\nb v}_{L^2} +
\norm{\nb U_0}_{L^2} \norm{ v}_{L^2} + \norm{v}_{L^2} \norm{\nb
V}_{L^2}
\\
&\lec (1+\norm{V}_X+\norm{v}_X) \norm{v}_X.
\end{align}

(ii) By Sobolev Theorems, we have the compact embedding:
$X\hookrightarrow L^r(\Omega) \quad \forall\,r\in[1,6)$. Thus if a
sequence $V_k\in X$ is bounded in~$X$, i.e.,
$\|V_k\|_{L^2(\Omega)}+\|\nabla V_k\|_{L^2(\Omega)}\le C$, then we
can extract a~subsequence $V_{k_l}$ which converges to some $V\in
X$ in $L^3(\Omega)$ norm: $\|V_{k_l}-V\|_{L^3(\Omega)}\to0$
as~$l\to\infty$. Then using the condition $V_{k_l}\equiv V\equiv0$
on~$\partial\Omega$ and integration by parts, it is easy to see
that~$\|F(V_{k_l})-F(V)\|_{X'}\to0$ and, consequently,
$\|G(V_{k_l})-G(V)\|_{X'}\to0$ as~$l\to\infty$.
\end{proof}

\begin{corollary}[Existence in bounded domains]
\label{EiBD} {\sl  Let $\Om$ be a~bounded domain in~$\R^3$ with
connected Lipschitz boundary $\partial\Omega$, and assume
Assumption \ref{th3.1} for $U_0$. Then the system
\eqref{eq3.4}--\eqref{eq3.6} has a~solution~$V\in
H^1_{0,\si}(\Om)$.}
\end{corollary}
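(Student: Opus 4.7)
The plan is to obtain $V_*$ as a fixed point of $S=T\circ G$ on $X=H^1_{0,\si}(\Om)$ via the Leray--Schauder Theorem~\ref{L-ShT}. The continuity and compactness of $S$ are already supplied by Theorem~\ref{bp-CompT}, and the reformulation \eqref{coV-eq} guarantees that any fixed point $V_*=S(V_*)$ is a weak solution of \eqref{eq3.4}--\eqref{eq3.6}. Thus the only remaining hypothesis to check is the boundedness of the $\lambda$-orbit
$$\Lambda=\{V\in X:V=\lambda S(V)\text{ for some }\lambda\in[0,1]\}.$$

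First I would unwind the abstract identity $V=\lambda T(G(V))$. By the defining property \eqref{iLs1} of $T$, it is equivalent to
$$\int_\Om \nb V\cdot \nb\ze=\lambda\int_\Om G(V)\cdot\ze,\quad \forall \ze\in C^\I_{c,\si}(\Om),$$
where $G(V)=V+x\cdot\nb V+F_0+F_1(V)$. A standard de Rham pressure recovery (valid since $\pd\Om$ is Lipschitz) produces some $P$ such that the pair $(V,P)$ satisfies
$$-\De V+\nb P=\lambda\bigl(V+x\cdot\nb V+F_0+F_1(V)\bigr),\quad \div V=0,$$
which is precisely equation \eqref{eq3.4-l}. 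Applying Theorem~\ref{th3.2} then yields $\norm{V}_{H^1(\Om)}\le C(U_0,\Om)$ with a constant independent of $\lambda\in[0,1]$, so $\Lambda$ is bounded in $X$.

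Leray--Schauder therefore produces a fixed point $V_*=S(V_*)$ in $X$, and \eqref{coV-eq} identifies it as the sought solution of \eqref{eq3.4}--\eqref{eq3.6}. I do not anticipate a genuine obstacle in this corollary: the heavy lifting -- producing an a priori bound uniform in $\lambda$ by the Leray \textit{reductio ad absurdum} together with the Euler rigidity Theorem~\ref{kmpTh2.3'} -- has already been done in Theorem~\ref{th3.2}. The only point that requires care is the bookkeeping in the previous paragraph, making sure that the same scalar $\lambda$ from the abstract equation $V=\lambda T(G(V))$ is the $\lambda$ that multiplies \emph{all} of $V$, $x\cdot\nb V$, $F_0$ and $F_1(V)$ on the right-hand side of \eqref{eq3.4-l}, so that Theorem~\ref{th3.2} applies verbatim.
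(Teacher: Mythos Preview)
Your proposal is correct and follows exactly the paper's approach: the paper's proof simply states that the corollary is a direct consequence of Theorems~\ref{L-ShT}, \ref{bp-CompT}, and~\ref{th3.2}, and you have merely unpacked that sentence, correctly identifying that $V=\lambda S(V)$ is the weak form of \eqref{eq3.4-l} so that Theorem~\ref{th3.2} supplies the required uniform bound.
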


\begin{proof}
This is a direct consequence of
Theorems~\ref{L-ShT}--\ref{bp-CompT} and~\ref{th3.2}.
\end{proof}

\begin{theorem}[Existence in unbounded domains]
\label{EiuBD} {\sl Let $\Om=\R^3_+$, and assume Assumption
\ref{th3.1} for $U_0$. Then the system
\eqref{eq3.4}--\eqref{eq3.6} has a~solution~$V\in
H^1_{0,\si}(\Om)$.}
\end{theorem}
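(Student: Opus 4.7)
The plan is to implement the invading domain method outlined in the introduction. First, fix an increasing sequence of balls $B_k=B(0,R_k)$ with $R_k\to\infty$ and set $\Omega_k=\Omega\cap B_k$. Each $\Omega_k$ is a bounded Lipschitz domain with connected boundary, so the hypotheses of Corollary \ref{EiBD} apply (Assumption \ref{th3.1} for $U_0$ is inherited from the whole $\Omega$ with the same constant, as noted after its statement). Hence there exists $V_k\in H^1_{0,\sigma}(\Omega_k)$ solving \eqref{eq3.4}--\eqref{eq3.6} on $\Omega_k$, which is exactly \eqref{eq3.4-li} with $\lambda=1$. Extend $V_k$ by zero to obtain $\widetilde V_k\in H^1_{0,\sigma}(\R^3_+)$.

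Next, invoke the a priori bound from Theorem \ref{th3.2-im}: the sequence $\widetilde V_k$ is uniformly bounded in $H^1_{0,\sigma}(\R^3_+)$ by a constant $C(U_0)$ independent of $k$. Consequently we may extract a subsequence (not relabeled) with $\widetilde V_k\rightharpoonup V$ weakly in $H^1_{0,\sigma}(\R^3_+)$; by local Rellich-Kondrachov compactness, the convergence is strong in $L^r_{\mathrm{loc}}(\overline{\Omega})$ for every $r\in[1,6)$, and in particular strong in $L^3(E)$ for every $E\Subset\overline{\Omega}$. The weak limit $V$ inherits $\div V=0$ and the zero trace on $\partial\R^3_+$.

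The main task is to pass to the limit in the weak formulation. Fix an arbitrary test field $\zeta\in C^\infty_{c,\sigma}(\R^3_+)$ and choose $k$ large enough that $\supp\zeta\subset\Omega_k$. Testing \eqref{eq3.4-li} against $\zeta$ and integrating by parts gives
\begin{equation*}
\int\nabla V_k\cdot\nabla\zeta-\int V_k\cdot\zeta-\int (x\cdot\nabla V_k)\cdot\zeta=\int F_0\cdot\zeta+\int F_1(V_k)\cdot\zeta.
\end{equation*}
The linear terms on the left pass to the limit immediately by weak $H^1$ convergence (the term with $x\cdot\nabla V_k$ is integrated by parts onto $\zeta$, which has compact support so $x\zeta$ is bounded). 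The term $\int F_0\cdot\zeta$ is independent of $k$. For $F_1(V_k)=-(U_0+V_k)\cdot\nabla V_k-V_k\cdot\nabla U_0$, after integrating the convection term by parts against the divergence-free $\zeta$, the expression becomes a sum of integrals of the form $\int (U_0+V_k)\otimes V_k:\nabla\zeta$ and $\int V_k\otimes U_0:\nabla\zeta$, localized to $\supp\zeta$. On this compact set, $V_k\to V$ strongly in $L^2$, $U_0\in L^6$ locally, and $\nabla\zeta$ is bounded, so these pass to the limit by an elementary product argument. This shows $V$ satisfies \eqref{iLs} against every $\zeta\in C^\infty_{c,\sigma}(\R^3_+)$, hence is the desired solution.

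The only delicate point is the passage to the limit in the quadratic term $V_k\cdot\nabla V_k$: without global compactness in $H^1_0(\R^3_+)$ this cannot be done directly, but the compact support of the test function reduces the problem to a bounded subdomain where Rellich compactness applies. Once the test function is fixed, all constants and integrals are controlled and no new analytic input beyond the uniform $H^1$ bound and local Rellich is required.
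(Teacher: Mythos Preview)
Your proof is correct and follows essentially the same invading-domain strategy as the paper: obtain approximate solutions on half-balls via Corollary~\ref{EiBD}, bound them uniformly by Theorem~\ref{th3.2-im}, and pass to a weak limit using local compactness against compactly supported test functions. Your write-up is in fact more explicit than the paper's on the limit passage in the nonlinear term, which the paper dispatches with ``It is easy to check''.
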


\begin{proof}
 Take balls~$B_k=B(0,k)$ and
consider the~increasing sequence of domains~$\Omega_k=\Omega\cap
B_k$ from Theorem~\ref{th3.2-im}. By Corollary~\ref{EiBD} there
exists a~sequence of solutions~$V_k\in H^1_{0,\si}(\Om_k)$ of
the~system~\eqref{eq3.4}--\eqref{eq3.6} in $\Omega_k$. By
Theorem~\ref{th3.2-im}, the norms $\|V_k\|_ {H^1_{0,\si}(\Om)}$
are uniformly bounded, thus we can extract a~subsequence $V_{k_l}$
such that the~weak convergence~$V_{k_l}\rightharpoonup V$ in
$W^{1,2}(\Omega')$ holds for any bounded
subdomain~$\Omega'\subset\Omega$. It is easy to check that the
limit function~$V$ is a solution of
the~system~\eqref{eq3.4}--\eqref{eq3.6} in $\Omega$.
\end{proof}

\section{Boundary data at infinity in the half space}
\label{S:BC}

In this section we restrict ourselves to the half space $\Om=\R^3_+$
with boundary $\Si = \pd \R^3_+$ and study the decay property of $U_0
= e^{-\frac 12A}a$. Our goal is to prove the following lemma, which
ensures Assumption \ref{th3.1} under the conditions of Theorem
\ref{th1.1}.

Denote $x^* = (x',-x_3)$ for $x=(x',x_3)\in \R^3$, and $\bka{z}=
(1+|z|^2)^{1/2}$ for $z \in \R^m$.

\begin{lemma}
\label{th:U0}
Suppose $a$ is a vector field in $\Om=\R^3_+$ satisfying \begin{equation}
\label{Eq5.1}
\begin{split}
a\in C^1_{loc}( \bar \Om \backslash \{0\};\R^3), \quad
&\div a=0, \quad a|_{\pd \Om}=0, \\
 a(x) = \la a(\la x)\quad & \forall x\in \Om, \ \forall \la>0.
\end{split}
\end{equation}
Let $U_0 = e^{-\frac 12A}a,$ where $A$ is the Stokes operator in
$\Om$. Then
\begin{equation}
\label{Eq5.2} |\nb ^k U_0(x)| \le c_k[a]_1 (1+x_3)^{-\min(1,k)}
(1+| x|)^{-1} , \quad \forall k \in \Z_+=\{0,1,2,\dots\},
\end{equation}
and, for any $0<\de\ll 1$,
\begin{equation}
\label{Eq5.3}
|\nb U_0(x)|
\le c_\de [a]_1
x_3^{-\de}\bka{x}^{2\de-2},
\end{equation}
where $[a]_m=\sup_{k\le m, |x|=1} |\nb^k a(x)|$.

If we further assume $a \in C^m_{loc}$, $m \ge 2$, and $\pd_3^k a|_{\Si}=0$ for $k<m$, then $|\nb ^k
U_0(x)| \le c_k [a]_m \bka{x_3}^{-k} \bka{ x}^{-1} $ for $k \le m$.
 \end{lemma}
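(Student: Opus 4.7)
My approach combines three ingredients: the explicit representation of $U_0 = e^{-\frac12 A}a$ via Solonnikov's Green tensor $G(x,y,t)$ for the Stokes system in the half space (which satisfies pointwise heat-kernel-type bounds $|\nabla_x^k G(x,y,t)|\lec (\sqrt t+|x-y|)^{-3-k}$, improved near $\Sigma$ by factors involving $x_3,y_3$); the self-similar structure of $a$; and interior/boundary parabolic regularity of the Stokes semigroup. The initial data is $-1$-homogeneous, so the solution $u(x,t)=(e^{-tA}a)(x)$ satisfies $u(\la x,\la^2 t)=\la^{-1}u(x,t)$ for every $\la>0$. Equivalently,
\[
\nb^k U_0(x)=|x|^{-1-k}\,(\nb^k u)\bke{\hat x,\tfrac1{2|x|^2}},\qquad \hat x = x/|x|.
\]
This identity is the main lever: it reduces bounds at large $|x|$ to short-time regularity estimates for $u$ on the unit half-sphere.

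\textbf{Proof of \eqref{Eq5.2}.} Using $|a(y)|\le[a]_1/|y|$ and integrating $G(x,y,t)$ against $a$, one obtains that $u(\,\cdot\,,s)$ and its derivatives are bounded on $\{\hat x:|\hat x|=1\}$ uniformly for $s\in(0,1]$, with constants depending only on $[a]_m$ (where $m\ge k$ is needed for derivative bounds). Combining with the scaling identity gives $|\nb^k U_0(x)|\lec |x|^{-1-k}$ for $|x|\ge 2$, i.e., the full bound \eqref{Eq5.2} in the interior regime $x_3\gec |x|/2$. For $|x|\lec 1$ one integrates $G(x,y,1/2)$ directly against $1/|y|$, which is $L^1(\Om,G(x,\cdot,1/2)\,dy)$-controlled. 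In the near-boundary regime $x_3\ll |x|$, the naive scaling loses because $\hat x$ approaches $\Sigma$; here I use the boundary regularity built into Solonnikov's kernel (the Dirichlet structure of $G$ combined with $a|_\Sigma = 0$) to gain the factor $(1+x_3)^{-\min(1,k)}$, which is the standard parabolic scaling near the boundary. For $k\ge 2$ no further $x_3$ decay is claimed, and none is needed.

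\textbf{Proof of \eqref{Eq5.3}.} Differentiating the self-similarity relation $u(x,t)=(2t)^{-1/2}U_0(x/\sqrt{2t})$ in $t$ at $t=1/2$ gives the identity
\[
U_0(x)+x\cdot\nb U_0(x)=-\pd_t u(x,1/2)=\De U_0(x)-\nb P(x),
\]
where $P$ is the Stokes pressure for $u$. The Laplacian term is dominated by the $k=2$ bound from \eqref{Eq5.2}, which yields $|\De U_0|\lec (1+x_3)^{-1}(1+|x|)^{-2}\lec x_3^{-\de}\bka{x}^{2\de-2}$ for any $\de>0$ after trading the $x_3^{-1}$ factor for $x_3^{-\de}\bka{x}^{-(1-\de)}$. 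The delicate term is $\nb P$: the pressure is determined non-locally through a harmonic extension of a boundary trace (the normal Stokes flux of $a$), so the sharp pointwise bound $|x|^{-2}$ fails at $\Sigma$ and only a Lorentz-space bound is available. Interpolating this boundary bound against the interior bound $|\nb P|\lec \bka{x}^{-2}$ yields precisely the advertised $x_3^{-\de}\bka{x}^{2\de-2}$. Finally, the improved bound for $a\in C^m_{loc}$ with vanishing normal derivatives follows because the odd reflection of $a$ across $\Sigma$ is $C^{m-1}$, so tangential and normal derivatives commute with $e^{-\frac12 A}$ up to order $m$, and one repeats Step 2 with $[a]_m$ in place of $[a]_1$.

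\textbf{Main obstacle.} The principal difficulty is the pressure estimate in Step 3. Unlike the heat equation, the Stokes pressure is nonlocal and carries a harmonic boundary layer that can grow logarithmically near $\Sigma$; this is exactly what forces the $x_3^{-\de}$ loss and the corresponding $\bka{x}^{2\de-2}$ trade-off. All other steps are standard applications of Solonnikov's kernel bounds combined with the scaling identity, but pinning down the borderline decay of $\nb P$ requires either an explicit Poisson-type representation or a Lorentz-space estimate for the half-space Helmholtz projection of $U_0\cdot \nb U_0$.
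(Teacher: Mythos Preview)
Your argument for \eqref{Eq5.3} has a genuine gap. You write that ``the Laplacian term is dominated by the $k=2$ bound from \eqref{Eq5.2}, which yields $|\De U_0|\lec (1+x_3)^{-1}(1+|x|)^{-2}$,'' but \eqref{Eq5.2} only gives $|\nb^2 U_0|\lec (1+x_3)^{-1}(1+|x|)^{-1}$: the exponent on $(1+|x|)$ is $-1$, not $-2$. In the regime $\Om_+=\{1+x_3\le |x'|\}$ this bound is merely $\bka{x}^{-1}$, and no trade-off of the form $x_3^{-\de}\bka{x}^{-(1-\de)}$ can convert it into $x_3^{-\de}\bka{x}^{2\de-2}$. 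The same obstruction already appears for $|\nb U_0|$: \eqref{Eq5.2} by itself does \emph{not} imply \eqref{Eq5.3} near the boundary, so you cannot deduce \eqref{Eq5.3} from \eqref{Eq5.2} plus a pressure estimate. In fact, in $\Om_+$ the two terms $\De U_0$ and $\nb P$ individually carry a slow $\bka{x}^{-1}$ tail that cancels in their difference $-\pd_t u$; your decomposition $U_0+x\cdot\nb U_0=-\De U_0+\nb P$ separates exactly the pieces that do not decay well on their own, and your pressure paragraph (``interpolating this boundary bound against the interior bound'') is not a proof of the needed cancellation.

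The paper's route is quite different and avoids both $\De U_0$ and $\nb P$ altogether. It first splits $U_0=U_1+U_2$ using Solonnikov's kernel $\breve G_{ij}=\de_{ij}\Ga+G^*_{ij}$; the direct estimates on $U_1,U_2$ give \eqref{Eq5.2} and also \eqref{Eq5.3} in the interior cone $\Om_-=\{1+x_3>|x'|\}$. For $\Om_+$ a second decomposition is introduced: extend $a$ to $\R^3$ by the divergence-preserving reflection $\bar a_j(x)=\e_j a_j(x^*)$, let $u$ solve the heat equation with data $\bar a$, and write $U_0=u-w$ where $w$ is the \emph{boundary layer integral} against Solonnikov's Poisson-type kernels $K_{ij}$. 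For the heat part one uses the homogeneity identity $(1+x\cdot\nb)\bar a=0$ directly inside the convolution to get $|(1+x\cdot\nb)u|\lec \bka{x}^{-2}$ with no loss. The entire burden then falls on the explicit boundary layer integral $w$, and the paper estimates $\pd_k w$ and $(1+\sum_{k<3}x_k\pd_k)w$ term by term (integrals $I_1,\ldots,I_{10}$), splitting the time interval, integrating by parts in tangential variables, and using the kernel bounds for $\cC_i$ and $\pd_3\Ga$; the $x_3^{-\de}$ loss arises precisely from the short-time singularity $s^{-1+\e}$ in these integrals. This is the missing idea in your proposal.
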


Estimates \eqref{Eq5.2} and \eqref{Eq5.3} imply, in particular,
\begin{equation}
U_0 \in L^4(\Om) \cap L^\infty(\Om), \quad \nb U_0 \in
L^2(\Om),
\end{equation}
and hence Assumption \ref{th3.1} for $U_0$ is satisfied.

\subsection{Green tensor for nonstationary Stokes system in half space}
Consider the nonstationary Stokes system in half space $\R^3_+$,
\begin{equation}
\label{Eq5.5}
\pd_t v - \De v + \nb p = 0, \quad \div v=0, \quad (x\in \R^3_+, \ t>0),
\end{equation}
\begin{equation}
\label{Eq5.6}
v|_{x_3=0}=0, \quad v|_{t=0}=a.
\end{equation}
{In our notation,
\begin{equation}
\label{notat} v(t)=e^{-tA}a.
\end{equation}}

It is shown by Solonnikov \cite[\S2]{MR1992567} that,
if $a=\breve a$ satisfies
\begin{equation}
\label{solenoidal}
\div \breve a = 0, \quad \breve a_3|_{x_3=0}=0,
\end{equation}
then
\begin{equation}
\label{eq5.3}
v_i(x,t) = \int_{\R^3_+} \breve G_{ij}(x,y,t) \breve a_j(y)dy
\end{equation}
with
\begin{align}
\breve G_{ij}(x,y,t)& = \de_{ij}\Ga(x-y,t) + G_{ij}^*(x,y,t) \label{Eq5.7}
\\
G_{ij}^*(x,y,t) &=  -\de_{ij}\Ga(x-y^*,t) \nonumber
 \\
&\quad - 4(1-\de_{j3})\frac {\pd}{\pd x_j}
 \int_{\R^2 \times [0,x_3]} \frac {\pd}{\pd x_i} E(x-z) \Ga(z-y^*,t)\,dz,
\nonumber
\end{align}
where $E(x) = \frac 1{4\pi |x|}$ and $\Ga(x,t)=(4\pi t)^{-3/2}
e^{-\frac {|x|^2}{4t}}$ are the fundamental solutions of the Laplace
and heat equations in $\R^3$.  (A sign difference occurs since $E(x) =
\frac {-1}{4\pi |x|}$ in \cite{MR1992567}.)  Moreover, $G_{ij}^*$
satisfies the pointwise bound (\cite[(2.38)]{MR1992567})
\begin{equation}
\label{Solonnikov.est}
|\pd_t^ s D_x ^k D_y^\ell G_{ij}^*(x,y,t)|\lec t^{-s - \ell_3/2}
(\sqrt t+x_3)^{-k_3} (\sqrt t+|x-y^*|)^{-3- |k'|- |\ell'|} e^{-\frac{c
    y_3^2}t}
\end{equation}
for all $s\in \N=\bket{0,1,2,\ldots}$ and $k,\ell \in \N^3$.

Note that $\breve G_{ij}$ is not the Green tensor in the strict sense
since it requires \eqref{solenoidal}.  There is no known pointwise
estimate for the Green tensor, cf.~Solonnikov \cite{MR0171094} and
Kang \cite{MR2097573}.

We now estimate $U_0 = e^{- \frac 12A}a$ for $a$ satisfying
\eqref{Eq5.1}.  By \eqref{eq5.3} and \eqref{Eq5.7},
\begin{equation}
\label{eq5.12}
U_{0,i}(x) = \int_{\R^3_+} \Ga(x-y,\tfrac12) a_i(y)dy + \int_{\R^3_+}
G_{ij}^*(x,y,\tfrac12) a_j(y)dy =: U_{1,i}(x)+ U_{2,i}(x).
\end{equation}
By \eqref{Solonnikov.est}, for $k \in\Z_+$ and using only
$|a(y)|\lec \frac 1{|y'|}$,
\begin{align}
\nonumber
|\nb ^k U_2(x)| &\lec \int_{\R^3_+} (1+x_3)^{-k} (1+x_3+|x'-y'|)^{-3} e^{- cy_3^2}\frac 1{|y'|} dy
\\
\nonumber
&\lec (1+x_3)^{-k}  \int_{\R^2} (1+x_3+|x'-y'|)^{-3} \frac 1{|y'|} dy'
\\
\nonumber
& = (1+x_3)^{-k-2}\int_{\R^2} (1+|\bar x-z'|)^{-3} \frac 1{|z'|} dz',\quad (\bar x = \frac{x'}{1+x_3})
\\
\nonumber
&\lec (1+x_3)^{-k-2} (1+|\bar x|)^{-1}
\\
\label{eq5.13}
&= (1+x_3)^{-k-1} (1+x_3+| x'|)^{-1}.
\end{align}
To estimate $U_1$, fix a cut-off function $\zeta(x)\in C^\I_c(\R^3)$
with $\zeta(x)=1$ for $|x|<1$. We have
\begin{equation}
\label{eq5.14}
\nb^k U_{1,i}(x) = \int_{\R^3_+} \Ga(x-y,\tfrac12)\nb^k_y
((1-\zeta(y)) a_i(y)) dy + \int_{\R^3_+} \nb^k_x\Ga(x-y,\tfrac12)
(\zeta(y) a_i(y)) dy
\end{equation}
where we used $a|_{\Si}=0$, and hence, for $k\le 1$,
\begin{equation}
\label{eq5.15}
|\nb^k U_1(x)|\lec \int_{\R^3} e^{-|x-y|^2/2} \bka{y}^{-1-k}dy + e^{-x^2/4} \lec
\bka{x}^{-1-k}.
\end{equation}
We can get the same estimate for $k \ge 2$ if we assume $\nb^k a$
is defined and has the same decay. On the other hand, we can show
$|\nb^k_xU_1(x)| \lec \bka{x}^{-2}$ for $k \ge 2$ if we place the
extra derivatives on $\Ga$ in the first integral of
\eqref{eq5.14}.

Combining \eqref{eq5.13} and \eqref{eq5.15}, we get \eqref{Eq5.2} and
the last statement of Lemma \ref{th:U0}.

Denote
\begin{equation}
\Om_- = \{x\in \Om: 1+x_3 > |x'|\}, \quad
\Om_+ = \{x\in \Om: 1+x_3 \le |x'|\}.
\end{equation}
By \eqref{eq5.13} and \eqref{eq5.15}, %
we have shown
\eqref{Eq5.3} in $\Om_-$ (with $\de=0$).

It remains to show \eqref{Eq5.3} in $\Om_+$.

\subsection{Estimates using boundary layer integral}

Denote $\e_j=1$ for $j<3$ and $\e_3=-1$.  Thus $x^*_j = \e_j x_j$.
Let $\bar a(x)$ be an extension of $a(x)$ to $x\in \R^3$ with
\[
\bar a_j(x) = \e_j a_j(x^*), \quad \text{if }x_3 <0.
\]
Since $\div a=0$ in $\R^3_+$ and $a|_{\Si}=0$, it follows that $\div
\bar a=0$ in $\R^3$.  Let $u(x,t)$ be the solution of the
nonstationary Stokes system in $\R^3$ with initial data $\bar a$,
given simply by
\[
u_i(x,t) = \int_{\R^3} \Ga(y,t) \bar a_i(x-y)\,dy.
\]
It follows that
$u_i (x,t) = \e_i u_i(x^*,t)$.
Thus
\begin{equation}
\pd_3 u_i(x,t)|_{\Si}=0, \quad (i<3);\quad
u_3(x,t)|_{\Si}=0.
\end{equation}
We have $|\nb^k a(y)| \lec |y|^{-1-k}$ for $k\le 1$.  By the same
estimates leading to \eqref{eq5.15} %
for $U_1$,
we have
\begin{equation}
\label{eq6.13}
|\nb^k_x u_i(x,\tfrac 12)|
\lec \bka{x}^{-1-\min(1,k)}, \quad (k \le 2).
\end{equation}
Thus $u(x,\tfrac 12)$ satisfies  \eqref{Eq5.3}.

Using self-similarity condition
\begin{equation}
\label{sc_u} u(x,t)= \la u(\la x, \la^2 t)\qquad \forall\la>0,
\end{equation}
from (\ref{eq6.13}) %
we get
\begin{equation}
\label{eq6.13t}
|\nb^m_x u_i(x,t)| \lec
\left \{
\begin{aligned}
\bigl(|x|+\sqrt{t}\bigr)^{-1-m}, \quad (m \le 1),
\\
t^{-1/2}\bigl(|x|+\sqrt{t}\bigr)^{-2}, \quad (m = 2).
\end{aligned}
\right.
\end{equation}

Decompose now
\[
v = u - w.
\]
Then $w$ satisfies the nonstationary Stokes system in $\R^3_+$ with
zero force, zero initial data, and has boundary value
\begin{equation}
\label{u:bc}
w_j(x,t)|_{x_3=0} = u_j(x',0,t), \quad \text{if }j<3; \quad
w_3(x,t)|_{x_3=0} =0.
\end{equation}
It is given by the boundary layer integral (using \eqref{u:bc}),
\begin{equation}
w_i(x,t) = \sum_{j=1,2}\int_0^t \int_\Si K_{ij}(x-z',s) u_j(z',0,t-s)\,dz'ds ,
\end{equation}
where, for $j<3$,
 (\cite[pp.~40, 48]{MR0171094})
\begin{align}
K_{ij}(x,t)&= -2\de_{ij}\pd_3 \Ga -\frac 1{\pi} \pd_j \mathcal{C}_i ,
\\
\label{Ci.def}
\cC_i(x,t) &= \int_{\Si \times [0,x_3]} \pd_3 \Ga(y,t) \frac {y_i -x_i} {|y-x|^3}\,dy.
\end{align}
(Note that $K_{i3}$ ($j=3$) have extra terms.)  They satisfy for $j<3$
(\cite[pp.~41, 48]{MR0171094})
\begin{align}
\label{eq6.16} |\pd_t^m D_{x'}^\ell \pd_{x_3}^k \cC_i(x,t)| &\le c
t^{-m-\frac12} (x_3+\sqrt t)^{-k} (|x|+\sqrt t)^{-2-\ell}.
\end{align}

We now show \eqref{Eq5.3} for $w(x,1/2)$ in the region $\Om_+: 1+x_3
\le |x'|$.

For $t=1/2$ and $i,k \in \{1,2,3\}$,
\begin{align}
\nonumber
\pd_{x_k} w_i(x,\tfrac12)& = - \sum_{j=1,2}\int_{0}^{\frac 12} \int_\Si  \frac 1{\pi} \pd_k \mathcal{C}_i (x-z',s)  \pd_{z_j} u_j(z',0,\tfrac 12-s)\,dz'ds
\\
\nonumber
&\quad
 -\mathbf{1}_{i<3}\int_0^{\frac 12} \int_\Si  2\pd_k \pd_3 \Ga  (x-z',s) u_i(z',0,\tfrac 12-s)\,dz'ds
\\
\label{EQ5.24}
&=I_1+I_2.
\end{align}
Above, we have integrated by parts in tangential directions $x_j$ in
$I_1$.

By \eqref{eq6.13t} and \eqref{eq6.16},
\[
|I_1|\lec \int_0^{\frac12} \int_\Si s^{-\frac12} (x_3+ \sqrt {s})^{-1}
(|x-z'|+\sqrt {s})^{-2}(|z'|+\sqrt {\tfrac12-s})^{-2}\,dz'\,ds.
\]
Fix $0<\e \le 1/2$.  Splitting $(0,1/2) = (0,1/4] \cup (1/4,1/2)$, and
changing variable $s \to 1/2 - s$ in $(1/4,1/2)$, we get
\begin{align*}
| I_1|\lec &  \int_0^{\frac14} \int_\Si x_3^{-2\e} s^{-1+\e}  (|x'-z'|+x_3+\sqrt {s})^{-2}(|z'|+1)^{-2}\,dz'\,ds
\\
& + \int_0^{\frac14} \int_\Si  (x_3+ 1)^{-1} (|x'-z'|+x_3+1)^{-2}(|z'|+\sqrt s)^{-2}\,dz'\,ds.
\end{align*}
Integrating first in time and using that, for $0<b<\I$, $0 \le a
<1<a+b$, and $0<N<\I$,
\begin{equation}
\label{Eq-Nab} \int_0^{1} \frac {ds}{s^a (N+s)^b} \le \frac
C{N^{a+b-1} (N+1) ^{1-a}},
\end{equation}
\begin{equation}
\int_0^{1} \frac {ds}{s^a (N+s)^{1-a}} \le C \min \bke{ \frac
  1{N^{1-a}}, \log \frac{2N+2}N} ,
\end{equation}
where the constant $C$ is independent of $N$, we get
\begin{align*}
| I_1|\lec &   \int_\Si x_3^{-2\e}  (|x'-z'|+x_3)^{-2+2\e} (|x'-z'|+x_3+1)^{-2\e}(|z'|+1)^{-2}\,dz'
\\
& +  \int_\Si  (x_3+ 1)^{-1} (|x'-z'|+x_3+1)^{-2}\min \bke{ \frac 1{|z'|^2}, \log \frac{2|z'|^2+2}{|z'|^2}} \,dz'.
\end{align*}
Dividing the integration domain to $|z'|< |x'|/2$, $|x'|/2<|z'|<2 |x'|$
and $|z'|> 2|x'|$, we get
\begin{equation}
| I_1|\lec  x_3^{-2\e} \bka{x}^{-2+\de},\quad (x\in \Om_+)
\end{equation}
for any $0<\de\ll 1$.  Taking $\e=\de/2$ and $\e=1/2$, we get
\begin{equation}
\label{Eq5.30}
(1+x_3) |I_1| \lec  x_3^{-\de}\bka{x}^{-2+2\de} ,\quad (x\in \Om_+).
\end{equation}

To estimate $I_2$ for $i<3$ (note $I_2 =0$ if $i=3$), we separate two
cases. If $k<3$, integration by parts gives
\[
I_2 = -\int_0^{\frac 12} \int_\Si 2 \pd_3 \Ga (x-z',s) \pd_{z_k}
u_i(z',0,\tfrac 12-s)\,dz'ds .
\]
Using $u e^{-u^2} \le C_\ell (1+u)^{-\ell}$ for $u>0$ for any
$\ell>0$,
\begin{equation}
\label{Eq5'1} \pd_3 \Ga(x,s) = cs^{-2} \frac{x_3}{\sqrt
s}e^{-x^2/4s} \le cs^{-2}(1+\frac{|x|}{\sqrt s})^{-3} =c s^{-1/2}
(|x|+\sqrt s)^{-3}.
\end{equation}
Hence $I_2$ can be estimated in the same way as $I_1$, and
\eqref{Eq5.30} is valid if $I_1$ is replaced by $I_2$ and $k<3$.

When $k=3$, by $\pd_t \Ga = \De \Ga$ and integration by parts,
\begin{align*}
I_2 &= \int_0^{\frac 12} \int_\Si  2 (\sum_{j<3}\pd_j^2-\pd_t) \Ga  (x-z',s)  u_i(z',0,\tfrac 12-s)\,dz'ds
\\
& =\sum_{j<3} \int_0^{\frac 12} \int_\Si  2 \pd_j \Ga  (x-z',s)  \pd_{z_j} u_i(z',0,\tfrac 12-s)\,dz'ds
\\
& \quad +  \int_0^{\frac 12} \int_\Si  2  \Ga  (x-z',s) \pd_t u_i(z',0,\tfrac 12-s)\,dz'ds
\\
& \quad - \lim_{\mu\to 0_+}  \bke{ \int_\Si  2  \Ga  (x-z',\tfrac12 -\mu) u_i(z',0,\mu )\,dz -  \int_\Si  2  \Ga  (x-z',\mu) u_i(z',0,\tfrac 12-\mu)\,dz}
\\
&=I_3+I_4+ \lim_{\mu\to 0_+} \bke{ I_{5,\mu}+I_{6,\mu}}.
\end{align*}
Here $I_3$ can be estimated in the same way as $I_1$, and
\eqref{Eq5.30} is valid if $I_1$ is replaced by $I_3$.  For $I_4$,
since $\pd_t u_i = \De u_i$, by estimate \eqref{eq6.13t} for $\nb^2 u$,
\begin{equation}
\label{eq5.34}
|I_4|
 \lec \int_0^{\frac 12}  \int_\Si s^{-\frac 32} (1+\frac{|x-z'|^2}{4s})^{-\frac 32}
(\tfrac 12-s)^{-\frac 12}
(|z'|+\sqrt {\tfrac 12-s})^{-2}\,dz'\,ds.
\end{equation}
We have similar estimate as $I_1$ with the following
difference: we have to use the estimate~(\ref{Eq-Nab}) during the
integration over each subinterval $s\in[0,1/4]$ and
$s\in[1/4,1/2]$, for the second subinterval we
apply~(\ref{Eq-Nab}) with $a=\frac12$, $b=1$, $N=|z'|^2$.

For the boundary terms, the integrand of $I_{5,\mu}$ is bounded by
$ e^{-\frac{|x-z'|^2}{2}} |z'|^{-1}$ and converges to $0$ as
$\mu\to 0_+$ for each $z'\in \Si$. Thus $\lim I_{5,\mu}=0$ by
Lebesgue dominated convergence theorem. For $I_{6,\mu}$,
\begin{equation}
\label{eq5.35}
|I_{6,\mu}|\lec \mu^{-1/2}e^{-\frac{x_3^2}{4\mu}} \int_\Si
\Ga_{\R^2} (x'-z',\mu) \frac 1{\bka{z '}}dz' \lec
\mu^{-1/2}e^{-\frac{x_3^2}{4\mu}} \frac 1{\bka{x'}},
\end{equation}
which converges to $0$ as $\mu\to0_+$ for any $x \in \Om$.

We conclude that, for either $k<3$ or $k=3$, \eqref{Eq5.30} is valid
if $I_1$ is replaced by $I_2$ and hence, for any $0<\de\ll 1$,
\begin{equation}
\label{eq6.27}
(1 +x_3)
 |\pd_k w_i(x,\tfrac 12)|\lec x_3^{-\de}\bka{x}^{-2+2\de},
\quad \forall x\in \Om_+, \ \forall i,k \le 3.
\end{equation}

Combining \eqref{eq6.13} %
and \eqref{eq6.27}, %
we have shown \eqref{Eq5.3} in $\Om_+$. This concludes
the proof of Lemma \ref{th:U0}.  \hfill \qed

\section{Self-similar solutions in the half space}
\label{S6}

In this section we first complete the proof of Theorem \ref{th1.1},
and then give a few comments.

\begin{proof}[Proof of Theorem  \ref{th1.1}]

By Lemma \ref{th:U0}, for those $a$ satisfying the assumptions of
Theorem \ref{th1.1}, $U_0 = e^{-\frac 12A}a$ satisfies \eqref{Eq5.2}
and \eqref{Eq5.3}, and hence Assumption \ref{th3.1} is satisfied. By
Theorem \ref{EiuBD}, there is a solution $V \in H^1_{0,\si}(\R^3_+)$
of the system \eqref{eq3.4}--\eqref{eq3.6}.

Noting $U_0 \in C^\I(\R^3_+)$ by \eqref{Eq5.2}, the system
\eqref{eq3.4}--\eqref{eq3.6} is a perturbation of the stationary
Navier-Stokes system with smooth coefficients.  The regularity
theory for Navier-Stokes system implies that $V \in
C^\I_{loc}(\overline{\R^3_+})$.  The vector field $U= U_0 + V$ is
thus a smooth solution of the Leray equations \eqref{eq1.6} in
$\R^3_+$.

The vector field $u(x,t)$ defined by \eqref{SS.formula}, $ u(x,t)=
\frac 1{\sqrt{2t}}\, U\bke{\frac x{\sqrt{2t}}}$, is thus smooth
and self-similar. Moreover,
\[
v(x,t) = u(x,t) - e^{-tA}a = \frac 1{\sqrt{2t}}\, V \bke{\frac
  x{\sqrt{2t}}}
\]
satisfies $\norm{v(t)}_{L^q(\R^3_+)} =
\norm{V}_{L^q(\R^3_+)}(2t)^{\frac3{2q}-\frac12}$ and $\norm{\nb
  v(t)}_{L^2(\R^3_+)} = \norm{\nb V}_{L^2(\R^3_+)} (2t)^{-1/4}$.  This
finishes the proof of Theorem \ref{th1.1}.
\end{proof}

{\it Remark.}  Let $u_0(x,t)= (e^{-tA}a)(x)= \frac 1{\sqrt{2t}}\, U_0
\bke{\frac x{\sqrt{2t}}}$. We have $u_0(\cdot,t) \to a$ as $t\to 0_+$
in $L^{3,\I}(\R^3_+)$. Indeed, by \eqref{Eq5.2}, $|U_0(x)| \lec
\bka{x}^{-1} \in L^{3,\I} \cap L^q$, $q>3$.  We have
$\norm{u_0(t)}_{L^q(\R^3_+)} =
\norm{U_0}_{L^q(\R^3_+)}(2t)^{\frac3{2q}-\frac12}$, which remains
finite as $t\to 0_+$ only if $q=(3,\I)$, and
\begin{equation}
\label{Eq6.2}
|u_0(x,t)| \lec \frac 1{\sqrt t}  \cdot
\frac 1{ 1+\frac{|x|}{\sqrt t}}
 = \frac 1{\sqrt t+|x|}.
\end{equation}
This is consistent with the whole space case $\Om=\R^3$.

For the difference $V(x)$, we only have its $L^q(\R^3_+)$ bounds,
and not pointwise bounds as \eqref{Eq1.14} in \cite{JS,Tsai14}.

\section*{Acknowledgments}
The authors are deeply indebted to K.~Pileckas and R.~Russo for
valuable discussions, in particular, they pointed out the way to
simplify the proofs for more strength result.

This work was initiated when M.K.~visited the Center for Advanced
Study in Theoretical Sciences (CASTS) of the National Taiwan
University. The research of Korobkov was partially supported by
the Russian Foundation for Basic Research (project
No.\,14-01-00768-a) and by Dynasty Foundation. The research of
Tsai was partially supported by the Natural Sciences and
Engineering Research Council of Canada grant 261356-13.

Mikhail Korobkov, Sobolev Institute of Mathematics,
Acad. Koptyug pr.~4, and Novosibirsk State University, Pirogova
str., 2, 630090 Novosibirsk, Russia; korob@math.nsc.ru

\medskip

Tai-Peng Tsai, Department of Mathematics, University of British
Columbia, Vancouver, BC V6T 1Z2, Canada; and Center for Advanced Study
in Theoretical Sciences, National Taiwan University, Taipei, Taiwan;
e-mail: ttsai@math.ubc.ca


\begin{thebibliography}{10}

%
%
%
%
%
%
%

\bibitem{Adams} { R.A. Adams, J.F. Fournier}, Sobolev spaces (2-nd edition), Academic Press, 2003
(Volume 140 de Pure and Applied Mathematics).

\bibitem{Agmon} { S. Agmon, A. Douglis and L. Nirenberg}, Estimates near the boundary
for solutions of elliptic partial differential equations
satisfying general boundary conditions II, {\it Commun. Pure Appl.
Math.}, {\bf 17} (1964), 35--92.

\bibitem{Amick} { Ch.J. Amick}: Existence of solutions to the
nonhomogeneous steady Navier--Stokes equations, {\it Indiana Univ.
Math. J.} {\bf 33} (1984), 817--830.


{ \bibitem{Amirat} { Y. Amirat, D. Bresch, J. Lemoine,
J. Simon}: Existence of semi-periodic solutions of steady
Navier-Stokes equations in a half space with an exponential decay
at infinity, {\it Rend. Sem. Mat. Univ. Padova}, {\bf 102} (1999),
341--362.}

{ \bibitem{Barraza} Barraza, O.: Self-similar solutions
in weak $L^p$-spaces of the Navier-Stokes equations. Rev. Mat.
Iberoamericana, {\bf 12} (1996), 411--439. }

\bibitem{CMP}
Cannone, M.; Meyer, Y.; Planchon, F.: Solutions auto-simlaires des
equations de Navier-Stokes, In: Seminaire X-EDP, Centre de
Mathematiques, Ecole polytechnique, 1993-1994.

\bibitem{CP} Cannone, M.; Planchon, F.: Self-similar solutions for the
  Navier-Stokes equations in $R^3$. Comm. Part. Diff. Equ. 21 (1996),
  no. 1-2, 179--193.

%
%
%
%

%
%
%
%


\bibitem{GM} Giga, Y.; Miyakawa, T.: Navier-Stokes Flow in $R^3$ with
  Measures as Initial Vorticity and Morrey
  Spaces. Comm. Part. Differ. Equ.  14 (5), 577--618, 1989.

\bibitem{JS} Jia,~H, \SVERAK,~V.: {Local-in-space estimates near
  initial time for weak solutions of the Navier-Stokes equations and
  forward self-similar solutions},
 Invent. Math. 196 (2014), no. 1, 233--265.

\bibitem{MR2097573}
K.~Kang.
\newblock On boundary regularity of the {N}avier-{S}tokes equations.
\newblock {\em Comm. Partial Differential Equations}, 29(7-8):955--987, 2004.


\bibitem{KaPi1} {  L.V. Kapitanskii and K. Pileckas}: On spaces
of solenoidal vector fields and boundary value problems for the
Navier--Stokes equations in domains with noncompact boundaries,
{\it Trudy Mat. Inst. Steklov  } {\bf 159}  (1983), 5--36 .
English Transl.: {\it Proc. Math. Inst. Steklov } {\bf 159}
(1984),   3--34.

{ \bibitem{Kato} Kato, T.: Strong solutions of the
Navier-Stokes equation in Morrey spaces, Bol. Soc. Brasil. Mat.
(N.S.), {\bf 22} (1992), no.~2, 127--155.}

\bibitem{Koch-Tataru}
Koch, H., Tataru, D.: Well-posedness for the Navier-Stokes equations. Adv. Math. 157 (1), 22--35 (2001)

\bibitem{korob1} Korobkov, M.V.: Bernoulli law under
minimal smoothness assumptions, Dokl. Math. 83 (2011), 107--110.

\bibitem{KPR1} Korobkov, M.V.; Pileckas K.; Russo, R.:
On the flux problem in the theory of steady Navier-Stokes
  equations with nonhomogeneous boundary
  conditions. Arch. Ration. Mech. Anal. 207 (2013), no. 1, 185--213.

\bibitem{KPR2} Korobkov, M.V.; Pileckas K.; Russo, R.:
The existence of a solution with finite Dirichlet integral
  for the steady Navier-Stokes equations in a plane exterior symmetric
  domain. J. Math. Pures Appl. (9) 101 (2014), no. 3, 257--274.

\bibitem{KPR2B}  Korobkov, M.V.; Pileckas K.; Russo, R.:
An existence theorem for steady Navier-Stokes equations in the axially symmetric case. Ann. Sc. Norm. Super. Pisa Cl. Sci. (5) 14 (2015), no. 1, 233--262.
arXiv:1110.6301 DOI:
10.2422/2036-2145.201204\_003





\bibitem{KPR3} Korobkov, M.V.; Pileckas K.; Russo, R.:
Solution of  Leray's problem  for stationary Navier-Stokes
equations in plane and axially symmetric spatial domains, {\it
Ann. of Math.}, {\bf 181}, no.~2, 769-807. DOI:
http://dx.doi.org/10.4007/annals.2015.181.2.7

\bibitem{KPR4} Korobkov, M.V.; Pileckas K.; Russo, R.: The existence theorem   for the steady
Navier--Stokes  problem  in exterior axially symmetric 3D domains,
arXiv:1403.6921, submitted to {\it Mathematische Annalen}

%
%
%
%
%
%
%
%
%
%
%
%
%
%
%
%
%

\bibitem{Leray33}
J.~Leray.
\newblock Etude de diverses {\'e}quations int{\'e}grales non lin{\'e}aires et
  de quelques probl{\`e}mes que pose l'hydrodynamique.
\newblock {\em J. Math. Pures Appl.}, 12:1--82, 1933.


\bibitem{Leray34}
J.~Leray.
\newblock Sur le mouvement d'un liquide visqueux emplissant l'espace.
\newblock {\em Acta Math.}, 63(1):193--248, 1934.

\bibitem{LR} Lemari\'e-Rieusset, P. G.: Recent developments in the
  Navier-Stokes problem, Chapman \& Hall/CRC Research Notes in
  Mathematics 431 Chapman \& Hall/CRC, Boca Raton, FL, 2002.


%
 %
 %

\bibitem{NRS} Ne\v cas, J.; {R\accent23 u\v {z}i\v {c}ka}, M.;
  \SVERAK, V.: On Leray's self-similar solutions of the Navier-Stokes
  equations,
  {\it Acta Math.} 176 (1996), 283--294.

\bibitem{Solonnikov} { V.A. Solonnikov}, General boundary value problems for Douglis-Nirenberg
elliptic systems, I, {\it Izv. Akad. Nauk SSSR, Ser. Mat.}, {\bf
28} (1964), 665--706; II, {\it Trudy Mat. Inst. Steklov}, {\bf 92}
(1966), 233--297. English Transl.: I, {\it Amer. Math. Soc.
Transl.}, {\bf 56} (2), 192--232 (1966), II, {\it Proc. Steklov
Inst. Math.}, {\bf 92}, 269--333 (1966).


\bibitem{MR0171094}
V.~A. Solonnikov.
\newblock Estimates for solutions of a non-stationary linearized system of
  {N}avier-{S}tokes equations.
\newblock {\em Trudy Mat. Inst. Steklov.}, 70:213--317, 1964.
\newblock English translation in A.M.S. Translations, Series II 75:1-117.

\bibitem{MR1992567}
Solonnikov, V.~A.,
Estimates for solutions of the nonstationary Stokes problem in anisotropic Sobolev spaces and estimates for the resolvent of the Stokes operator. (Russian. Russian summary) Uspekhi Mat. Nauk 58 (2003), no. 2(350), 123--156; translation in
Russian Math. Surveys 58 (2003), no. 2, 331-365

%
%
%
%
%
%
%
%
%
%
%
%
%
%
%
%
%
%
\bibitem{Tsai98} Tsai, T.-P.: On Leray's self-similar solutions of the
  Navier-Stokes equations satisfying local energy estimates, Archive
  for Rational Mechanics and Analysis 143 (1998), 29--51.

\bibitem{Tsai14} Tsai, T.-P.: Forward discretely self-similar
  solutions of the Navier-Stokes equations,
 Comm. Math. Phys. 328 (2014), no. 1, 29-44.


%
%
%

\bibitem{Yamazaki}
M.~Yamazaki.
\newblock The {N}avier-{S}tokes equations in the weak-${L}^n$ space with
  time-dependent external force.
\newblock {\em Math. Ann.}, 317:635--675, 2000.

\end{thebibliography}
\end{document}